\newtheoremstyle{thm}{}{}{\slshape}{}{\scshape}{.}{0.5em}{}
\newtheoremstyle{def}{}{}{}{}{\scshape}{.}{0.5em}{}
\newtheoremstyle{rmk}{}{}{}{}{\scshape}{.}{0.5em}{}
\newtheoremstyle{claim}{}{}{}{}{\slshape}{.}{0.5em}{}
\theoremstyle{thm}
\newtheorem{newstatement}{newstatement}
\newtheorem{theorem}[newstatement]{Theorem}
\newtheorem{corollary}[newstatement]{Corollary}
\newtheorem{proposition}[newstatement]{Proposition}
\theoremstyle{def}
\newtheorem{definition}[newstatement]{Definition}
\theoremstyle{rmk}
\newtheorem{remark}[newstatement]{Remark}
\theoremstyle{claim}
\newtheorem*{claim}{Claim}
\let\expandafter\oldproof\csname\string\proof\endcsname
\let\oldendproof\endproof
\renewenvironment{proof}[1][\proofname]{%
  \oldproof[\slshape #1]%
}{\oldendproof}
\def\fieldstyle{\mathbb} 
\def\p_#1{p_{\kern-1.5pt_#1}}
\def\emph#1{{\em #1}}
\def\textbf#1{{\bf #1}}
\def\MR#1{\relax}
\newcommand{\Int}{\mathop{\mathrm{Int}}\nolimits} 
\let\Bd\partial
\newcommand{\id}{\mathop{\mathrm{id}}\nolimits}
\newcommand{\diff}{\mathop{\mathrm{Diff}}\nolimits}
\newcommand{\M}{\mathop{\cal M}\nolimits}
\newcommand{\Sign}{\mathop{\mathrm{Sign}}\nolimits}
\newcommand{\SO}{\mathop{\mathrm{SO}}\nolimits}
\newcommand{\R}{\fieldstyle R}
\newcommand{\C}{\fieldstyle C}
\newcommand{\Z}{\mathbb Z}
\let\hash\#
\renewcommand{\#}{\mathbin{\hash}}
\renewcommand{\em}{\sl}
\def\(#1){({\em #1\/})}
\def\){\relax}
\def\varemptyset{{\text{\raise.21ex\hbox{$\not$}}\mkern.15mu\mathrm{O}\mkern.15mu}}
\let\emptyset\varemptyset
\let\geq\geqslant
\let\leq\leqslant
\let\phi\varphi
\let\epsilon\varepsilon
\renewcommand{\section}{\@startsection%
{section}
{1}
{0mm}
{1.5\bigskipamount}
{0.5\bigskipamount}
{\centering\normalsize\sc}}
\renewcommand{\paragraph}{\@startsection%
{paragraph}
{4}
{0mm}
{\bigskipamount}
{-1.25ex}
{\normalsize\sl}}
\def\provedboxcontents#1{$\square$}
\def\ft{\@ifnextchar[{\ft@s}{\ft@}}
\def\ft@{\ft@@@s[\f@size]}
\def\ft@s[{\@ifnextchar{a}{\ft@sz[}{\ft@@s[}}
\def\ft@@s[{\@ifnextchar{s}{\ft@sz[}{\ft@@@s[}}
\def\ft@@@s[#1]{\ft@sz[at #1pt]}
\def\ft@sz[#1]#2{\font\fonttemp=#2 #1\fonttemp\ignorespaces}
\let\co\colon
\begin{document}

\def\calstyle#1{{\text{\ft{eusm10}#1}}}

\let\cal\calstyle

\title[Universal Lefschetz fibrations]{\bf\large UNIVERSAL LEFSCHETZ FIBRATIONS\\[4pt]
AND LEFSCHETZ COBORDISMS}

\author{Daniele Zuddas}

\address{\normalfont Korea Institute for Advanced Study, 85 Hoegiro, Dongdaemun-gu, Seoul 130-722, Republic of Korea.}

\email{zuddas@kias.re.kr}

\date{}

\subjclass[2010]{Primary 55R55; Secondary 57R90, 57N13}

\begin{abstract}
We construct universal Lefschetz fibrations, that are defined in analogy with the classical universal bundles. We also introduce the cobordism groups of Lefschetz fibrations, and we see how these groups are quotient of the singular bordism groups via the universal Lefschetz fibrations.
	
	\medskip\noindent
	{\sc Keywords:} universal Lefschetz fibration, singular bordism, Lefschetz cobordism, Dehn twist, manifold.
\end{abstract}
\maketitle

\section*{Introduction}

Topological Lefschetz fibrations over a surface have been given considerable attention in the last decade, because of their applications to symplectic and contact topology, see for example \cite{D99, LP01, AO01, GS99}. This led to several generalizations, including achiral Lefschetz fibrations and their relations with branched coverings and braided surfaces \cite{APZ2013, Z09, Fu00}, broken Lefschetz fibrations \cite{GK07, B08, B09}, and Morse 2-functions \cite{GK12, GK14}. In \cite{DSKZ2014} Matsumoto's torus fibration on $S^4$ \cite{Mat82} (see also \cite[Example 8.4.7]{GS99}) has been used for constructing an almost complex structure on $\R^4$ containing holomorphic tori.

We are going to further generalize Lefschetz fibrations by allowing the base manifold to have arbitrary dimension. The critical image of a Lefschetz fibration is a codimension-2 submanifold of the target manifold, and the monodromy is a homomorphism to the mapping class group of the fiber. Actually, to understand the larger amount of information that a generalized Lefschetz fibration carries with respect to a standard one (that is, over a surface), we need several types of monodromies, each one capturing some aspects, but not others.

Universal Lefschetz fibrations have been introduced in \cite{Z11} in analogy with the universal bundles, under the additional assumption for the base surface to have non-empty boundary. The purpose of the present paper is twofold: to relax this restriction by allowing the base surface to be closed, and to start building a (co)bordism theory for Lefschetz fibrations, along the lines of the classical bordism theory. Main results include: a characterization of universal Lefschetz fibrations in dimension two (Theorem~\ref{universal/thm}) and three (Theorem~\ref{universal3/thm}), an explicit construction of these fibrations, and an application to Lefschetz cobordism groups (that are defined in Section~\ref{lf-cob/sec}), proving that these groups are quotients of certain singular bordism groups in dimension two and three (Proposition~\ref{cobord/thm} and Corollary~\ref{lfcobord-bord/thm}). We will give some computations, and further developments, in a forthcoming paper.

Throughout this paper all manifolds and maps are assumed to be smooth. We consider only oriented compact manifolds and (local) diffeomorphisms that preserve the orientations, if not differently stated.

\paragraph{Acknowledgements} The work presented in this paper has been partially carried out at the Max Planck Institute for Mathematics in Bonn, Germany, during the 4-manifolds semester (January -- June 2013). So, I would like to thank MPIM for support and hospitality. Thanks also to Mark Grant (Newcastle University) for a useful answer in MathOverflow.

\section{Definitions, preliminaries and notations}

By the standard definition, a Lefschetz fibration is, roughly speaking, a smooth map over a surface with only non-degenerate (possibly achiral) complex singularities. In order to state our results we propose the following generalization.

For  $f \co V \to M$ we denote by $\widetilde A_f \subset V$ the critical set of $f$, and by $A_f = f(\widetilde A_f)$ the critical image of $f$.

\begin{definition}\label{lefschetz-map/def}
	Let $M$ and $V$ be manifolds of dimensions respectively $m + 2$ and $m + 2k$ with $m \geq 0$ and $k \geq 2$. A Lefschetz fibration $f \co V \to M$ is a map such that:
	
	\begin{enumerate}
		\item
			near any critical point $\tilde a \in \widetilde A_f$, $f$ is locally equivalent to the map $f_0 \co \R^m_+ \times \C^k \to \R^m_+ \times \C$ defined by $f_0(x, z_1, \dots, z_k) = (x, z_1^2 + \cdots + z_k^2)$ for $x \in \R^{m}_+ = \{(x_1, \dots, x_m) \in \R^m \;|\; x_m \geq 0\}$ and $(z_1, \dots,  z_k) \in \C^k;$
			
			\item
			$f_| \co \widetilde A_f \to M$ is an embedding;
		
		\item
			$f_| \co V - f^{-1}(A_f) \to M - A_f$ is a locally trivial bundle with fiber a manifold $F$ (the regular fiber of $f)$.
	\end{enumerate}
\end{definition}

Note that in case $A_f = \emptyset$, $f$ is an honest bundle.

\begin{definition}
	We call $f_| \co V - f^{-1}(A_f) \to M - A_f$ the regular bundle associated with $f$.
\end{definition}

We see below that there is also a {\em singular bundle} associated with $f$.
The following proposition is a simple consequence of the definition.

\begin{proposition}
	Let $f \co V \to M$ be a Lefschetz fibration. Then
	\begin{enumerate}
		\item
			$\widetilde A_f$ is a proper submanifold of $V$ of dimension $m$;
			
		\item
			$A_f$ is a proper submanifold of $M$ of codimension two;
			
		\item
			$f_| \co \widetilde A_f \to A_f$ is a diffeomorphism;
		
		\item
			the regular fiber $F \subset V$ is a submanifold of dimension $2k - 2$.
	\end{enumerate}
\end{proposition}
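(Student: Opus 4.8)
The plan is to reduce everything to the local normal form, since conditions (1)--(3) of Definition~\ref{lefschetz-map/def} describe $f$ completely both near and away from $\widetilde A_f$. First I would analyze the model map $f_0 \co \R^m_+ \times \C^k \to \R^m_+ \times \C$, $f_0(x, z) = (x, q(z))$ with $q(z) = z_1^2 + \cdots + z_k^2$. Writing $dq = 2(z_1\, dz_1 + \cdots + z_k\, dz_k)$, the complex differential of $q$ vanishes exactly at $z = 0$, so the real Jacobian of $f_0$ has full rank $m + 2$ precisely when $z \neq 0$ and drops rank only on $\R^m_+ \times \{0\}$. Hence the critical set of $f_0$ is $\R^m_+ \times \{0\}$, a submanifold of dimension $m$. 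Since every critical point of $f$ has a neighborhood on which $f$ is equivalent to $f_0$, this computation shows that $\widetilde A_f$ is locally an $m$-dimensional submanifold, which (being a local property) gives the submanifold part of (1); moreover the model identifies $\widetilde A_f \cap \Bd V$ with $\{x_m = 0\} \times \{0\}$, i.e. with $\Bd \widetilde A_f$, and exhibits transversality to $\Bd V$, so $\widetilde A_f$ is a proper (neat) submanifold.

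For (2) and (3) I would invoke condition (2) of the definition directly. Since $f_| \co \widetilde A_f \to M$ is an embedding, its image $A_f$ is an embedded submanifold of $M$ diffeomorphic to $\widetilde A_f$; this proves (3) and gives $\dim A_f = m = \dim M - 2$, hence the codimension-two claim of (2). The neatness of $A_f$ follows again from the model, where $f_0(\R^m_+ \times \{0\}) = \R^m_+ \times \{0\}$ sits in $\R^m_+ \times \C$ as a codimension-two neat submanifold meeting $\Bd M$ in its boundary.

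Finally, for (4) I would use condition (3): over $M - A_f$ the map $f$ is a locally trivial bundle, so each regular fiber $F = f^{-1}(y)$ with $y \in M - A_f$ is the preimage of a regular value. By the regular value theorem $F$ is a submanifold of $V$ of dimension $\dim V - \dim M = (m + 2k) - (m + 2) = 2k - 2$.

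The computations are elementary, so the main obstacle is bookkeeping rather than depth: one must differentiate $f_0$ carefully enough to locate the critical set exactly on $\{z = 0\}$, and---more delicately---track the boundary behaviour so that the word \emph{proper} in (1) and (2) is justified, verifying that $\widetilde A_f$ and $A_f$ are neat submanifolds (boundary mapping to boundary, transverse to $\Bd V$ and $\Bd M$) rather than merely submanifolds. The half-space factor $\R^m_+$ in the normal form is precisely what makes this boundary compatibility automatic.
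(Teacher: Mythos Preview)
Your argument is correct. The paper does not actually give a proof of this proposition; it simply states that it ``is a simple consequence of the definition'' and moves on. Your proposal supplies exactly the elementary verification the author leaves to the reader: the computation of the critical locus of the local model $f_0$, the use of condition~(2) of Definition~\ref{lefschetz-map/def} to conclude that $A_f$ is embedded and diffeomorphic to $\widetilde A_f$, and the dimension count for the regular fiber via condition~(3). There is no alternative approach to compare with---you have written out what the paper implicitly assumes.
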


For $m = 0$, $f$ is an ordinary (possibly achiral) Lefschetz fibration. So, a generalized Lefschetz fibration looks locally as an ordinary one times an identity map.
Throughout the paper we assume $k = 2$. This implies that $F$ is a surface.

In general, $A_f$ can be non-orientable. However, if $A_f$ is orientable, by fixing an orientation on it (hence on $\widetilde A_f$ via $f_| \co \widetilde A_f \to A_f$) we can define the {\em positive} and the {\em negative} critical points and values: $\tilde a \in \widetilde A_f$ is a {\em positive critical point} of $f$ if the local coordinates considered in the definition can be chosen to be compatible with the orientations of $V$, $M$, and $A_f$ (that corresponds to $\R^m \times \{0\} \subset \R^m \times \C$ in the local chart). Otherwise, $\tilde a$ is said to be a {\em negative critical point}. Accordingly, $a = f(\tilde a)$ is said to be a {\em positive} or {\em negative critical value}. This positivity or negativity is locally 
invariant, hence the connected components of $A_f$ inherit it.

Two Lefschetz fibrations $f_1 \co V_1 \to M_1$ and $f_2 \co V_2 \to M_2$ are said to be {\em equivalent} if there are orientation-preserving diffeomorphisms $\phi \co V_1 \to V_2$ and $\psi \co M_1 \to M_2$ such that $\psi \circ f_1 = f_2 \circ \phi$. This implies that $\psi(A_{f_1}) = A_{f_2}$ and that $\phi(\widetilde A_{f_1}) = 
\widetilde A_{f_2}$. If $A_{f_1}$ and $A_{f_2}$ are oriented, we assume also that $\psi_| \co A_{f_1} \to A_{f_2}$ is orientation-preserving. If $f_1$ and $f_2$ are equivalent, we make use of the notation $f_1 \cong f_2$.

Let $f \co V \to M$ be a Lefschetz fibration with regular fiber $F = F_{g,b}$, the oriented surface of genus $g$ with $b$ boundary components, and let $N$ be a $n$-manifold.

\begin{definition}\label{f-regular/def}
	A map $q \co N \to M$ is said to be $f$-regular if $q$ and $q_{|\Bd N}$ are transverse to $f$.
\end{definition}

If $q \co N \to M$ is $f$-regular then $\widetilde V = \{(x, v) \in N \times V \;|\; q(x) = f(v)\}$ is a $(n + 2)$-manifold and the map $\widetilde f \co \widetilde V \to N$ defined by $\widetilde f(x, v) = x$ is a Lefschetz fibration. The map $\widetilde q \co \widetilde V \to V$ defined by $\widetilde q(x, v) = v$ sends each fiber of $\widetilde f$ diffeomorphically onto a fiber of $f$, hence the regular fiber of $\widetilde f$ is still $F$.
Moreover, we have $A_{\widetilde f} = q^{-1}(A_f)$.

\begin{definition}\label{lefschetz-pul/def}
	We say that $\widetilde f$ is the pullback of $f$ by $q$. We denote it by $\widetilde f = q^*(f)$.
\end{definition}

Let $\cal L(F)$ be some class of Lefschetz fibrations with fiber $F$.

\begin{definition}\label{univ/def}
	We say that a Lefschetz fibration $u \co U \to M$ with fiber $F$ is $\cal L(F)$-universal (or universal with respect to $\cal L(F))$ if $(1)$ for any $f \co V \to N$ that belongs to $\cal L(F)$ there exists a $u$-regular map $q \co N \to M$ such that $q^*(u) \cong f$, and $(2)$ any such pullback for an arbitrary $q \co N \to M$ belongs to $\cal L(F)$ up to equivalence, where $N$ is the base of a Lefschetz fibration of $\cal L(F)$.
\end{definition}

In other words, $u$ is $\cal L(F)$-universal if and only if the class $\cal L(F)$ coincides with the class of pullbacks of $u$ obtained by those maps $q \co N \to M$ such that $N$ is the base of a Lefschetz fibration that belongs to $\cal L(F)$.

\paragraph{Monodromies}
Now we consider connected Lefschetz fibrations. The non-connected ones con be easily handled by considering the restrictions to the connected components.

Let $\M_{g,b}$ be the mapping class group of $F_{g,b}$, namely the group of self-diffeomorphisms of $F_{g,b}$ which keep $\Bd F_{g,b}$ fixed pointwise, up to isotopy through such diffeomorphisms. Let also $\widehat\M_{g,b}$ be the general mapping class group of $F_{g,b}$, whose elements are the isotopy classes of orientation-preserving self-diffeomorphisms of $F_{g,b}$ (without assumptions on the boundary).

The regular bundle associated with $f \co V \to M$ has a monodromy homomorphism $\widehat\omega_f \co \pi_1(M - A_f) \to \widehat \M_{g,b}$.

\begin{definition}
	We call $\widehat\omega_f$ the bundle monodromy of $f$.
\end{definition}

For a codimension-2 submanifold $A \subset M$, let $N(A)$ be a compact tubular neighborhood of $A$ in $M$, endowed with its disk bundle structure $B^2 \hookrightarrow N(A) \to A$. Take the base point $* \in M - N(A)$, and let $N(*) \subset M - N(A)$ be a small ball around $*$. We join $N(*)$ with each component of $N(A)$ by a narrow 1-handle, and let $\overline N(A)$ be the result. By construction, the manifold $\overline N(A)$ is uniquely determined, up to diffeomorphisms, by the normal bundle of $A$ in $M$, although its embedding in $M$ in general is not unique. If $A$ is connected, we have $\overline N(A) \cong N(A)$.

We denote by $\mu_1(M, A)$ the subgroup of $\pi_1(M - A)$ generated by the meridians of $A$ in $M$. Note that $\mu_1(M, A)$ is the kernel of the homomorphism induced by the inclusion $i_* \co \pi_1(M - A) \to \pi_1(M)$, so it is a normal subgroup.

Let $f \co V \to M$ be a Lefschetz fibration, and let $\bar f \co \overline V \to \overline N(A_f)$ be the restriction of $f$ over $\overline N(A_f)$.

Taking a fiber of $N(A_f) \to A_f$, that is a transverse 2-disk $B^2$, the restriction of $f$ over it is a Lefschetz fibration $f' \co V' \to B^2$ with only one critical point. So, its monodromy is a Dehn twist \cite{GS99} about a curve $c \subset F$, which is said to be a {\em vanishing cycle}. Thus, the singular fiber is homeomorphic to $F / c$. The vanishing cycles that correspond to different components of $A_f$ might be topologically different as embedded curves in $F$. However, the local model of $f$ near a critical point implies that the restriction of $f$ over a component of $A_f$ is a locally trivial bundle over that component with fiber $F/c$ (the total space is not a topological manifold).

\begin{definition}
	We call $f_| \co f^{-1}(A_f) \to A_f$ the singular bundle associated with $f$.
\end{definition}

Note that the singular fiber $F/c$ is homeomorphic to a (possibly disconnected) surface $F_c$, with two points $p_1$ and $p_2$ identified. The surface $F_c$ is obtained by surgering $F$ along $c$. Moreover, any self-homeomorphism of $F/c$ lifts to a unique homeomorphism of $(F_c, \{p_1, p_2\})$. If $c$ is non-separating, then $F_c \cong F_{g-1, b}$ is connected, so in this case the monodromy of the singular bundle is a homomorphism $\omega_f^{\Join} \co \pi_1(A_f) \to \widehat\M_{g-1, b, 2}$, where $\widehat\M_{g, b, n}$ denotes the general mapping class group of $F_{g,b}$ with $n$ marked points (mapping classes are allowed to permute the marked points and the boundary components). In general, we have to consider the general mapping class group of a surface with two marked points and with at most two components (each one containing a marked point).

\begin{definition}
	We call $\omega_f^\Join$ the singular monodromy of $f$. If $A_f$ is not connected, we intend that $\omega_f^\Join$ is the collection of the singular monodromies of the components of $A_f$.
\end{definition}

\begin{remark}
	If the vanishing cycles are all non-separating and $g \geq 2$, the singular bundle is determined by the singular monodromy.
\end{remark}

We already know that the monodromy of a meridian of $A_f$ in $\overline N(A_f)$ is a Dehn twist. Then, there is a canonical homomorphism $\omega_f \co \mu_1(\overline N(A_f), A_f) \to \M_{g,b}$ that sends a meridian to the corresponding Dehn twist.

\begin{definition}
	We call $\omega_f$ the Lefschetz monodromy of $f$.
\end{definition}

We say that $f$ is an {\em allowable Lefschetz fibration} if the monodromy of an arbitrary meridian of $A_f$ is a Dehn twist about a curve $c \subset F$ that is homologically essential in $F$. For the sake of simplicity, we assume that the Lefschetz fibrations we consider are allowable, if not differently stated. However, most results of this paper hold also in the non-allowable case, by suitable modifications.

Consider the canonical homomorphism $\beta \co \M_{g,b} \to \widehat\M_{g,b}$ that sends a mapping class $[\phi] \in \M_{g,b}$ to the mapping class $[\phi] \in \widehat\M_{g,b}$.

The Lefschetz and the bundle monodromies are related by the following commutative diagram
\begin{equation}\label{cd1/eqn}
	\xymatrix{
	\mu_1(\overline N(A_f), A_f)  \ar[r]^{i_*} \ar[d]_{\omega_f} & \pi_1(M - A_f) \ar[d]^{\widehat\omega_f} \\
	\M_{g,b} \ar[r]_{\beta} & \widehat\M_{g,b}}
\end{equation}
where $i_*$ is induced by the inclusion $i\co \overline N(A_f) - A_f \hookrightarrow M - A_f$.

There is also a compatibility condition between the bundle monodromy and the singular monodromy. Roughly speaking, the monodromy of a loop contained in $N(A_f)$ must preserve the vanishing cycle associated to this component.

Let $\Pi_1(F) = \pi_1(\diff(F), \id)$.
We say that $F$ is {\em exceptional} if $\Pi_1(F) \ne 0$. It is known that $F_{g,b}$ is exceptional if and only if $(g,b) \in \{(0, 0), (0, 1), (0, 2), (1, 0)\}$, see for example \cite{Gr73}. However, an allowable Lefschetz fibration with fiber the disk or the sphere is necessarily an honest bundle, and for this reason we assume that the fiber is not the sphere or the disk. So, the only exceptional fibers we admit are the torus and the annulus. Moreover, for any $(g,b) \ne (0, 0)$, $\pi_i(\diff(F_{g,b}), \id) = 0$ for all $i > 1$, $\Pi_1(T^2) \cong \Z^2$, and $\Pi_1(S^1 \times I) \cong \Z$, see \cite{EE67, EE69, Gr73}.

In order to state our results, we need a further invariant of Lefschetz fibrations. 
Consider an element $[\alpha] \in \pi_2(M - A_f)$, $\alpha \co S^2 \to M - A_f$, and let $\widetilde f = \alpha^*(f) \co \widetilde V \to S^2$. It follows that $\widetilde f$ is a locally trivial $F$-bundle. Decompose $S^2 = D_1 \cup_\partial D_2$ as the union of two disks $D_1$ and $D_2$, and trivialize $\widetilde f$ over $D_i$, that is $\widetilde f^{-1}(D_i) \cong D_i \times F$. The two trivializations differ by an element $\tau \in \Pi_1(F)$ along $\partial D_1 = \partial D_2 \cong S^1$. This defines a homomorphism $\omega_f^s \co \pi_2(M - A_f) \to \Pi_1(F)$, such that $\omega_f^s([\alpha]) = \tau$. This homomorphism is nothing but the one that fits in the homotopy exact sequence of the associated $\diff(F)$-bundle over $M - A_f$.

\begin{definition}
	We call $\omega_f^s$ the structure monodromy of $f$.
\end{definition}

Now, consider the pullback $\widetilde f = q^*(f)$, with $f \co V \to M$ and $q \co N \to M$ being base point preserving. Let $q_* \co \pi_i(N) \to \pi_i(M)$, and let $q_{|*} \co \pi_{i}(N - A_{\widetilde f}) \to \pi_{i}(M - A_{f})$ and $q_{|*} \co \pi_1(A_{\widetilde f}) \to \pi_1(A_f)$ be the homomorphisms induced by the restrictions $q_{|} \co N - A_{\widetilde f} \to M - A_{f}$ and $q_| \co A_{\widetilde f} \to A_f$ (we consider the collection of these homomorphisms when $A_{\widetilde f}$ is not connected).

The following proposition is simple and the proof is left to the reader.

\begin{proposition}
	Suppose that $q(\overline N(A_{\widetilde f})) \subset \overline N(A_f)$.
	We have $q_{*}(\mu_1(\overline N(A_{\widetilde f}), A_{\widetilde f})) \subset \mu_1(\overline N(A_f), A_f)$, $\omega_{\widetilde f} = \omega_{f} \circ q_{|*}$, $\widehat\omega_{\widetilde f} = \widehat\omega_{f} \circ q_{|*}$, and $\omega^s_{\widetilde f} = \omega^s_{f} \circ q_{|*}$. Moreover, the singular bundle of $\widetilde f$ is the pullback of the singular bundle of $f$ by $q_{|A_{\widetilde f}}$, hence $\omega_{\widetilde f}^\Join = \omega_f^\Join \circ q_{|*}$.
\end{proposition}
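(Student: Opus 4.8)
The plan is to verify each of the claimed naturality identities separately, all of which follow from the compatibility between the pullback construction and the monodromy definitions. The underlying principle is that the pullback $\widetilde f = q^*(f)$ is defined as the fibered product $\widetilde V = \{(x,v) \in N \times V \mid q(x) = f(v)\}$, and the map $\widetilde q \co \widetilde V \to V$ restricts to fiber-wise diffeomorphisms. So for every structure attached to $f$, the corresponding structure for $\widetilde f$ is obtained by pulling back along $q$, and the monodromy homomorphisms are accordingly precomposed with the induced maps on homotopy groups.

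First I would establish the inclusion $q_*(\mu_1(\overline N(A_{\widetilde f}), A_{\widetilde f})) \subset \mu_1(\overline N(A_f), A_f)$. Since $A_{\widetilde f} = q^{-1}(A_f)$ and the hypothesis $q(\overline N(A_{\widetilde f})) \subset \overline N(A_f)$ holds, a meridian of $A_{\widetilde f}$ maps under $q$ to a loop in $\overline N(A_f) - A_f$ that is homotopic to a meridian (or a product of meridians) of $A_f$, because $q_|$ sends a small transverse $2$-disk of the normal bundle of $A_{\widetilde f}$ to a transverse $2$-disk of the normal bundle of $A_f$. Then for the identity $\omega_{\widetilde f} = \omega_f \circ q_{|*}$, I would use that the Lefschetz monodromy sends a meridian to its associated Dehn twist, and that the vanishing cycle is preserved under $\widetilde q$: the restriction of $\widetilde f$ over a transverse disk of $A_{\widetilde f}$ is equivalent, via $\widetilde q$, to the restriction of $f$ over the image transverse disk, so the Dehn twists coincide. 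The identities $\widehat\omega_{\widetilde f} = \widehat\omega_f \circ q_{|*}$ and $\omega^s_{\widetilde f} = \omega^s_f \circ q_{|*}$ follow because the regular bundle of $\widetilde f$ is literally the pullback by $q_|$ of the regular bundle of $f$ (this is immediate from the fibered-product description, since $\widetilde q$ trivializes fibers), and both the bundle monodromy and the structure monodromy are invariants of the underlying $\diff(F)$-bundle, hence natural under bundle pullback.

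For the singular monodromy, I would first observe that the singular bundle $\widetilde f_| \co \widetilde f^{-1}(A_{\widetilde f}) \to A_{\widetilde f}$ is the pullback by $q_{|A_{\widetilde f}} \co A_{\widetilde f} \to A_f$ of the singular bundle of $f$. This again comes from the fibered product: over a point $x \in A_{\widetilde f}$ the singular fiber of $\widetilde f$ is identified via $\widetilde q$ with the singular fiber $F/c$ of $f$ over $q(x) \in A_f$. Once the singular bundle is identified as a pullback, the equality $\omega_{\widetilde f}^\Join = \omega_f^\Join \circ q_{|*}$ is the naturality of monodromy for pulled-back bundles, where $q_{|*} \co \pi_1(A_{\widetilde f}) \to \pi_1(A_f)$ is the induced map.

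The main obstacle, such as it is, lies in the bookkeeping for non-connected $A_f$: the homomorphisms $q_{|*}$ are really collections indexed by the components of $A_{\widetilde f}$, and one must check that $q_|$ sends each component of $A_{\widetilde f}$ into a single component of $A_f$ (true since components are connected) and that the positivity/negativity of critical values is respected (which holds because the local model is preserved by $\widetilde q$). Beyond this indexing care, each identity is a direct unwinding of definitions, so the proposition is genuinely routine, as the authors state.
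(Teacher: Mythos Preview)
Your proposal is correct; indeed the paper itself omits the proof entirely, stating only that it ``is simple and the proof is left to the reader.'' Your unwinding of definitions via the fibered-product description of the pullback, together with the standard naturality of bundle monodromy under pullback, is exactly the routine verification the authors had in mind, so there is nothing further to compare.
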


\paragraph{The twisting operation}

Consider a Lefschetz fibration $f \co V \to M$ with exceptional fiber $F$. Let $\psi \in \Pi_1(F)$. We are going to construct a new Lefschetz fibration $f_{\psi} \co V_\psi \to M$. Consider an oriented 2-disk $D \subset M - A_f$, and take a tubular neighborhood $C \times B^{m-1}$ of $C = \partial D$, with $m = \dim M$. Fix the (isotopically unique) trivialization of $f$ over $C \times B^{m-1}$ that extends over $D$. This determines a fiberwise embedded copy of $C \times B^{m-1} \times F$ in $V$, that is the preimage of $C \times B^{m-1}$. Now, twist $f$ over $C$ by means of $\psi$. To do this, remove $\Int(C \times B^{m-1} \times F)$ from $V$, and glue it back differently by composing the original attaching diffeomorphism to the right with $\Psi \co C \times B^{m-1} \times F \to C \times B^{m-1} \times F$ which is defined by $\Psi(x, y, z) = (x, y, \psi_x(z))$, where (up to some identifications) $\psi \co C \to \diff(F)$, $\psi \co x \mapsto \psi_x$, satisfies $\psi_{x_0} = \id_F$ for some $x_0 \in C$. 

What we get is a new Lefschetz fibration $f_\psi \co V_\psi \to M$. We call $f_\psi$ the {\em twisting} of $f$ by $\psi$. The twisting operation has been considered by Moishezon in \cite{M77} as one of the main tools needed to classify positive genus-1 Lefschetz fibrations over the 2-sphere. See also, for example, \cite{KMMW05} for a similar usage in the context of achiral fibrations.

\begin{remark}
	By results of Moishezon \cite[Part II]{M77} the twisting of $f$ is equivalent to $f$ if $\omega_f$ is surjective.
\end{remark}

\begin{theorem}
	Suppose is given the datum $L = (M, A, F, \omega, \widehat\omega, \zeta)$, where $A \subset M$ is a co\-di\-men\-sion-2 submanifold, $F$ is a not exceptional connected surface, $\omega \co \mu_1(\overline N(A), A) \allowbreak \to \M(F)$ and $\widehat\omega \co \pi_1(M - A) \allowbreak \to \widehat\M(F)$ are two homomorphisms that fit in the commutative diagram \eqref{cd1/eqn}, and $\zeta$ is a bundle over $A$ with fiber $F/c$, $c \subset F$ a simple curve that depends on the component of $A$, such that $\zeta$ is compatible with $\omega$ and $\widehat\omega$ in the above sense. Then there exists a Lefschetz fibration $f_L \co V_L \to M_L$ with fiber $F$, uniquely determined by $L$ up to equivalence, such that $A_{f_L} = A$, $\omega_{f_L} = \omega$, $\widehat\omega_{f_L} = \widehat\omega$, and having singular bundle equivalent to $\zeta$. Moreover, for another datum $L' = 
	(M', A', F', \omega', \widehat\omega', \zeta')$ we have $f_L \cong f_{L'}$ if and only if  there are diffeomorphisms $\psi \co (M, A, *) \to (M', A', *')$ sending $\overline N(A)$ onto $\overline N(A')$, and $h \co F \to F'$ such that $(1)$ $\zeta \cong \zeta'$ by a bundle equivalence that covers $\psi_{|A} : A \to A'$, and $(2)$ $\omega' \circ \psi_{1*} = h_* \circ \omega$ and $\widehat\omega' \circ \psi_{2*} = h_* \circ \widehat\omega$, where the $\psi_{i*}$ are the isomorphisms induced, respectively, by $\psi_1 = \psi_| \co \overline N(A) - A \to \overline N(A') - A'$ and $\psi_2 = \psi_| \co M - A \to M' - A'$ on the fundamental group, and $h_*$ is the canonical isomorphism induced by $h$ between the relevant mapping class groups $h_* \co \M(F) \cong \M(F')$ or $h_* \co \widehat\M(F) \cong \widehat\M(F')$. If $F$ is exceptional, the same holds up to twistings.
\end{theorem}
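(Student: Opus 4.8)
The plan is to reconstruct $f_L$ from its two canonical pieces---the regular bundle over the complement of a tubular neighborhood of $A$, and the local Lefschetz model over that neighborhood---and then to glue them, the key point being that for a non-exceptional fiber the gluing carries no free parameter. Write $W = M - \Int N(A)$, and let $S = \partial N(A)$ be the associated circle bundle over $A$, so that $M = W \cup_S N(A)$. First I would build the regular part. Since $F$ is not exceptional, $\Pi_1(F) = \pi_1(\diff F, \id) = 0$ and, by \cite{EE67, EE69, Gr73}, every component of $\diff(F)$ is contractible; hence $B\diff(F) \simeq K(\widehat\M(F), 1)$, so an $F$-bundle over any CW-complex is determined up to isomorphism by its monodromy into $\widehat\M(F)$. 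Applied to $\widehat\omega$ (using $\pi_1(W) \cong \pi_1(M - A)$) this produces an $F$-bundle $f_W \co V_W \to W$, unique up to fiberwise diffeomorphism, realizing $\widehat\omega$.

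Next I would construct the model over $N(A)$. Over the zero section it is the singular bundle, which we set equal to $\zeta$; over $N(A) - A$ it is an $F$-bundle whose monodromy is prescribed by $\widehat\omega$ along the directions of $A$ and whose meridians act by the Dehn twists dictated by $\omega$, as encoded in the commutative diagram \eqref{cd1/eqn}. Concretely, over each $2$-disk fiber of $N(A) \to A$ one places the standard genus-$g$ Lefschetz fibration with a single critical point and vanishing cycle $c$, i.e.\ the local model $f_0$ of Definition~\ref{lefschetz-map/def}(1); these fit together into a Lefschetz fibration $f_N \co V_N \to N(A)$ because the compatibility of $\zeta$ with $\omega$ and $\widehat\omega$ guarantees that the vanishing cycle is preserved along $A$ and that collapsing $c$ yields exactly the fibers of $\zeta$.

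The heart of the argument is the gluing over $S$. Both $f_W$ and $f_N$ restrict over $S$ to $F$-bundles, and by \eqref{cd1/eqn} they realize the same monodromy $\widehat\omega|_{\pi_1(S)}$, so there is a fiberwise diffeomorphism $\Phi$ of these restrictions covering $\id_S$; gluing by $\Phi$ yields $f_L \co V_L \to M_L = M$ with $A_{f_L} = A$, $\omega_{f_L} = \omega$, $\widehat\omega_{f_L} = \widehat\omega$ and singular bundle $\zeta$. The main obstacle---and the place where non-exceptionality is essential---is the uniqueness of this gluing. Two choices of $\Phi$ differ by a gauge transformation, i.e.\ a map $S \to \diff F$; once the restrictions over the zero section and over a meridian are fixed (by $\zeta$ and by $\omega$ respectively), the effect on the total space of changing $\Phi$ is a twisting in the sense defined above, and the inequivalent outcomes are controlled by $\pi_1(\diff F) = \Pi_1(F)$ through the structure monodromy $\omega^s$. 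Since $\Pi_1(F) = 0$ for non-exceptional $F$, the gluing is unique up to isotopy and $f_L$ is well-defined up to equivalence; when $F$ is exceptional the ambiguity is the $\Pi_1(F)$-worth of twistings, which accounts for the final clause.

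For uniqueness and the equivalence criterion I would argue as follows. Any Lefschetz fibration with datum $L$ canonically decomposes as above into its regular and singular parts, each determined by $\widehat\omega$ and by $(\omega, \zeta)$ respectively via the two constructions just described, so it is equivalent to $f_L$ (up to twistings if $F$ is exceptional); this gives existence and uniqueness. For the stated equivalence, the \emph{only if} direction is naturality: an equivalence $(\phi, \psi)$ restricts to a base diffeomorphism $\psi$ preserving $A$ and $\overline N(A)$ and induces a fiber diffeomorphism $h \co F \to F'$, and the monodromy and singular-bundle invariants transform by $\psi_{1*}$, $\psi_{2*}$, $h_*$ exactly as claimed. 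Conversely, given $\psi$, $h$, a bundle equivalence $\zeta \cong \zeta'$ over $\psi_{|A}$, and the identities $\omega' \circ \psi_{1*} = h_* \circ \omega$ and $\widehat\omega' \circ \psi_{2*} = h_* \circ \widehat\omega$, the determinacy of the regular part by $\widehat\omega$ and of the singular part by $(\omega, \zeta)$ lets me build the equivalence separately over $W'$ and over $N(A')$ and then glue; the gluing is unique up to isotopy for non-exceptional $F$ by the same $\Pi_1(F)$-argument, and up to twistings otherwise, completing the proof.
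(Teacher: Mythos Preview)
Your proposal is correct and follows essentially the same strategy as the paper's (sketched) proof: decompose $M$ into $M - \Int N(A)$ and $N(A)$, realize the regular bundle from $\widehat\omega$ and the local Lefschetz model from $(\omega,\zeta)$, and glue over the boundary circle bundle, with non-exceptionality of $F$ eliminating the ambiguity. The only noteworthy difference is in the phrasing of uniqueness: the paper argues that any fibered gluing diffeomorphism over $\partial N(A)$ extends to the interior of $N(A)$ (the two-dimensional fact, applied fiberwise over $A$), whereas you argue that the space of gluings has no free parameter because $\Pi_1(F)=0$; these are two sides of the same coin, and your use of $B\diff(F)\simeq K(\widehat\M(F),1)$ makes explicit what the paper leaves to ``classical theory of fiber bundles''.
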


Actually, this is a consequence of known general facts in fiber bundles theory, so we give only a sketch of the proof.

\begin{proof}[Proof (sketch)]
	By the classical theory of fiber bundles, $\widehat\omega$ determines uniquely an $F$-bundle over $M - \Int N(A)$, see for example \cite[Chapter 5]{FM12}. On the other hand, $\omega$ and $\zeta$ determine a Lefschetz fibration over $\overline N(A)$. Glue these fibrations by a suitable fibered diffeomorphism. This proves the existence. For the unicity, notice that any such fibered diffeomorphism extends to the interior of $N(A)$. Indeed, this is well-known in dimension two, and working on the tubular neighborhood $N(A)$ thought as a disk bundle $B^2 \hookrightarrow N(A) \to A$, one can adapt the two-dimensional case in a fiberwise fashion. Of course, the only ambiguity occurs when $F$ is exceptional, and this can be handled by a suitable twisting. 
\end{proof}

Note that the twisting action of $\Pi_1(F)$ is transitive on the set of possible structure monodromies for a fixed $(M, A, F, \omega, \widehat\omega)$. However, the structure monodromy cannot be used to resolve the ambiguity of the twisting action, as it can be easily seen by considering genus-1 Lefschetz fibrations over a closed surface.

\paragraph{Hurwitz systems and the monodromy sequence} By a Hurwitz system for a co\-di\-men\-sion-2 submanifold $A \subset M$ we mean a sequence $(\xi_1, \dots, \xi_n; \eta_1,\dots, \eta_k)$ where $\{\xi_1, \dots, \xi_n\}$ are meridians of $A$ that normally generate $\mu_1(\overline N(A), A)$, and $\{\eta_1, \dots, \eta_k\}$ are generators for $\pi_1(M - A)$ which are non-trivial in $\pi_1(M)$.

Once a Hurwitz system is fixed, we can represent the Lefschetz and the bundle monodromies of $f$ by a sequence of Dehn twists and mapping classes $(\delta_1, \dots, \delta_n; \allowbreak \gamma_1, \dots, \allowbreak \gamma_k)$, that we call the {\em monodromy sequence} of $f$. The elements of the monodromy sequence are given by $\delta_i = \omega_f(\xi_i) \in \M_{g,b}$, and $\gamma_i = \widehat\omega_f(\eta_i) \in \widehat\M_{g,b}$. 

Note that the Dehn twist $\delta_i$ is determined by a curve $c_i \subset F$, and by its sign. The curves $(c_1, \dots, c_n)$ are the vanishing cycles of $f$ with respect to the given Hurwitz system.

\section{The characterization theorems}

We denote by $\cal C_{g,b}$ the finite set of equivalence classes of homologically essential curves in $F = F_{g,b}$ up to orientation-preserving diffeomorphisms of $F$. Note that $\#\cal C_{g,b} = 1$ if $b \in \{0, 1\}$.

\begin{theorem}\label{universal/thm}
	A Lefschetz fibration $u \co U \to M$ with regular fiber $F$ is universal with respect to the class of Lefschetz fibrations over a surface and with fiber $F$, if the following three conditions hold:
	
	\begin{enumerate}
		\item
			$\widehat\omega_u$ is an isomorphism;
		
		\item
			$\omega_u$ and $\omega_u^s$ are surjective;
			
		\item
			any class of $\cal C_{g,b}$ can be represented by a vanishing cycle of $u$.
	\end{enumerate}
	
	On the other hand, as a partial converse, $u$ being universal implies $(2)$, $(3)$, and the surjectivity of $\widehat\omega_u$. 
	
	In particular, for $g \geq 2$ and $b \in \{0, 1\}$, $u$ is universal if $\widehat\omega_u$ is an isomorphism and $\omega_u$ is surjective.
	
	It follows that there exist universal Lefschetz fibrations for any fiber.
\end{theorem}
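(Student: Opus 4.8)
The plan is to establish the four assertions in turn, the sufficiency of (1)-(3) being the substantial one. The class $\cal L(F)$ consists of Lefschetz fibrations over a surface with fiber $F$, so in a pullback $q^*(u)$ the base $N$ is a surface and $A_{q^*(u)}=q^{-1}(A_u)$ is a finite set of points; in particular every such pullback already belongs to $\cal L(F)$ (by the pullback construction recalled before Definition~\ref{lefschetz-pul/def}), so for condition (2) of Definition~\ref{univ/def} only the realizability of each $f\in\cal L(F)$ as a pullback needs proof. So let $f\co V\to N$ be given, with $A_f=\{a_1,\dots,a_n\}$, and fix a Hurwitz system with meridians $\xi_i$, vanishing cycles $c_i$ and signs $\epsilon_i$, so that $\omega_f(\xi_i)=\tau_{c_i}^{\epsilon_i}$. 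Since $\widehat\omega_u$ is an isomorphism I set $\rho=\widehat\omega_u^{-1}\circ\widehat\omega_f\co\pi_1(N-A_f)\to\pi_1(M-A_u)$. Deleting small disks $D_i$ about the $a_i$ leaves a compact surface $N_0$ homotopy equivalent to a graph, so $\rho$ is realized by a smooth map $q_0\co N_0\to M-A_u$; the task is to extend $q_0$ across each $D_i$ to a disk meeting $A_u$ transversally in one point with vanishing cycle exactly $c_i$ and sign $\epsilon_i$.

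Conditions (2) and (3) are what make this extension possible. By (3) some component of $A_u$ has a vanishing cycle $c^{(j)}$ with $[c^{(j)}]=[c_i]$ in $\cal C_{g,b}$; choosing $h\in\widehat\M_{g,b}$ with $h(c^{(j)})=c_i$ and conjugating the corresponding meridian by $\widehat\omega_u^{-1}(h)$ produces an element $\mu_i\in\mu_1(\overline N(A_u),A_u)$, which is again a meridian (conjugates of meridians are meridians, and $\mu_1(\overline N(A_u),A_u)$ is normal in $\pi_1(M-A_u)$). By construction $\widehat\omega_u(\mu_i)=\beta(\tau_{c_i}^{\epsilon_i})=\widehat\omega_f(\xi_i)=\widehat\omega_u(\rho(\xi_i))$, so injectivity of $\widehat\omega_u$ forces $\mu_i=\rho(\xi_i)$, while the vanishing cycle of $\mu_i$ is $h(c^{(j)})=c_i$; thus $q_0|_{\partial D_i}$ is freely homotopic to a genuine meridian of $A_u$ and extends over $D_i$ as a one-point transverse disk, the sign $\epsilon_i$ being fixed by the orientation of that disk relative to the base. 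By the Proposition on pullback monodromies the resulting $q\co N\to M$ gives $q^*(u)$ the same critical image, bundle monodromy, Lefschetz monodromy and singular bundle as $f$, whence $q^*(u)\cong f$ by the realization theorem for the datum $(M,A,F,\omega,\widehat\omega,\zeta)$ proved above. When $A_f\ne\emptyset$ the complement $N-A_f$ is aspherical, so $\omega^s_f$ is trivial and matched automatically; the only extra data occurs for honest bundles $(A_f=\emptyset)$ with exceptional $F$, where for $N=S^2$ one takes $q=\alpha$ with $\omega_u^s([\alpha])$ the prescribed clutching (possible since $\omega_u^s$ is onto by (2)), and for a higher genus base one fixes the monodromy by $q_0$ and then corrects the Euler class in $H^2(N;\Pi_1(F))$ to its required value by connect-summing $q_0$ with such spheres $\alpha$, again using surjectivity of $\omega_u^s$.

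For the partial converse, assume $u$ universal and test it against explicit fibrations over surfaces. Given $\phi\in\widehat\M_{g,b}$, the $F$-bundle over an annulus with monodromy $\phi$ is some $q^*(u)$, so $\phi=\widehat\omega_u(q_{|*}(\text{core}))$ lies in the image of $\widehat\omega_u$; given $\psi\in\M_{g,b}$ written as a product of Dehn twists about essential curves (which generate the group), the Lefschetz fibration over a disk with those vanishing cycles is a pullback, so $\psi\in\Im\omega_u$; given $[c]\in\cal C_{g,b}$, the fibration over a disk with single vanishing cycle $c$ is a pullback, so $c$ is, up to the fiber diffeomorphism in the equivalence, a vanishing cycle of $u$, giving (3); and given $\tau\in\Pi_1(F)$, the $F$-bundle over $S^2$ with clutching $\tau$ is $\alpha^*(u)$ for some $\alpha\co S^2\to M-A_u$ with $\omega_u^s([\alpha])=\tau$, so $\omega_u^s$ is onto. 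This proves (2), (3) and the surjectivity of $\widehat\omega_u$; note that universality does not force injectivity of $\widehat\omega_u$, which is why only its surjectivity appears.

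For $g\ge 2$ and $b\in\{0,1\}$ the fiber is not exceptional, so $\Pi_1(F)=0$ and the surjectivity of $\omega_u^s$ in (2) is vacuous; moreover $\#\cal C_{g,b}=1$, and since $\omega_u$ is onto the nontrivial group $\M_{g,b}$ the critical set $A_u$ is nonempty and each of its (allowable, hence non-separating) vanishing cycles represents the unique class, so (3) holds. Thus here (1) together with surjectivity of $\omega_u$ already implies (1)-(3), hence universality. Finally, for existence with an arbitrary fiber $F$ I realize the finitely presented group $\widehat\M_{g,b}$ as $\pi_1(M-A)$ for a codimension-2 pair $(M,A)$ in which the meridians of the components of $A$ are sent by $\widehat\omega$ to a family of images of Dehn twists that generate and exhaust every class of $\cal C_{g,b}$, with $\omega$ the corresponding map to $\M_{g,b}$; feeding $(M,A,F,\omega,\widehat\omega,\zeta)$ into the realization theorem above yields a Lefschetz fibration with $\widehat\omega_u$ an isomorphism onto $\widehat\M_{g,b}$, $\omega_u$ surjective and all classes of $\cal C_{g,b}$ realized, and for exceptional $F$ one arranges in addition that $\pi_2(M-A)\to\Pi_1(F)$ is onto by attaching suitable $2$-spheres; conditions (1)-(3) then hold and the first part gives universality. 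I expect the main obstacle to be the sufficiency construction of the second paragraph: guaranteeing that the extension across each $D_i$ is genuinely a one-point transverse disk with vanishing cycle exactly $c_i$ (not merely in its class) while simultaneously realizing the global bundle monodromy, together with the careful treatment of signs and of the kernel of $\beta$ when $b\ge 1$, and, in the exceptional case, the reduction of the $H^2$ Euler class to the $\pi_2$-valued structure monodromy.
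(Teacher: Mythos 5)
Your overall strategy is sound, but it is genuinely different from the paper's. The paper does not run a direct obstruction-theoretic argument: it first builds an auxiliary connected surface $G \subset M$ transverse to $A_u$ (a disk near the base point, bands realizing enough meridians to normally generate $\mu_1(\overline N(A_u), A_u)$, and 1-handles realizing generators of $\pi_1(M)$), restricts $u$ over $G$, and invokes the already-proved bounded-surface criterion (Proposition~\ref{main-old/thm}, from \cite{Z11}) as a black box to pull back $f$ restricted to $S' = S - \Int B^2$; injectivity of $\widehat\omega_u$ is used only once, to show the boundary loop $q'(\Bd S')$ is null-homotopic in $M - A_u$ so that $q'$ caps off over the last disk. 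You instead rebuild the two-dimensional argument from scratch: realize $\rho = \widehat\omega_u^{-1}\circ\widehat\omega_f$ on the punctured base, and use condition (3) plus conjugation of meridians by $\widehat\omega_u^{-1}(h)$ together with injectivity of $\widehat\omega_u$ to force $\rho(\xi_i)$ to \emph{be} a meridian with vanishing cycle exactly $c_i$ and sign $\epsilon_i$, then conclude by the realization/uniqueness theorem for the datum $L$. This is a legitimate alternative: it avoids the auxiliary surface $G$ and the reduction to Proposition~\ref{main-old/thm} entirely, at the cost of using injectivity of $\widehat\omega_u$ throughout (the paper's route shows bounded bases need no injectivity at all). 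Your converse and the ``in particular'' clause match the paper's reasoning, and your existence sketch is a loose paraphrase of the Section~\ref{construct/sec} construction (where the surjectivity of $\omega^s$ for $F = T^2$ is arranged concretely by fiber sums with $\id_{B^2}\times q_i$ for the two torus bundles $q_1, q_2$ over $S^2$, not by ``attaching 2-spheres,'' which is not an operation on manifolds as stated).

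There is one genuine error: your claim that when $A_f \ne \emptyset$ the exceptional case needs no correction because $N - A_f$ is aspherical, ``so $\omega_f^s$ is trivial and matched automatically.'' Matching the full datum $(N, A_f, F, \omega, \widehat\omega, \zeta)$ only yields $q^*(u) \cong f$ \emph{up to twisting} when $F$ is exceptional — that is exactly the caveat in the realization theorem — and the triviality of both structure monodromies does not kill the twisting ambiguity. The paper says this explicitly: ``the structure monodromy cannot be used to resolve the ambiguity of the twisting action, as it can be easily seen by considering genus-1 Lefschetz fibrations over a closed surface.'' By Moishezon's result (Remark following the twisting construction), the twisting is absorbed only when $\omega_f$ is surjective, which fails, e.g., for an allowable torus fibration over a closed surface with a single vanishing cycle; for such an $f$ your argument as written could produce $q^*(u) \cong f_\psi \not\cong f$. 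The repair is already in your toolkit and is what the paper does uniformly for every base: since $\omega_u^s$ is surjective, pick $q'' \co S^2 \to M - A_u$ with $\omega_u^s([q'']) = \psi$ and replace $q$ by the connected sum $q \,\#\, q''$ (tube contained in $M - A_u$); you should apply this correction whenever $F$ is exceptional, not only when $A_f = \emptyset$. With that amendment, and modulo the bookkeeping you yourself flag (basing paths in the identification of $\omega_u(\mu_i)$ with $\tau_{c_i}^{\epsilon_i}$ in $\M_{g,b}$ rather than merely in $\widehat\M_{g,b}$ — note your conjugation argument correctly works at the level of curves, so $\ker\beta$ causes no trouble), your proof goes through.
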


The surjectivity of $\omega^s_u$ means that any locally trivial $F_{g,b}$-bundle over $S^{2}$ is the pullback of $u$ by a map $S^{2} \to M - A_{u}$.

\begin{remark}
	If $u$ is universal we cannot conclude that $\widehat\omega_u$ is an isomorphism. The reason is that any Lefschetz fibration can be embedded in a larger Lefschetz fibration by preserving the universality. For example we can add a 1-handle $H^1$ to the base (if it has boundary), along with a fiberwise attachment of $H^1 \times F$ to the total manifold. So, we can add non-trivial elements to $\ker \widehat\omega_u$.
\end{remark}

This theorem generalizes the following proposition, which has been proved in \cite{Z11}.

\begin{proposition}\label{main-old/thm}
	A Lefschetz fibration $u \co U \to S$ over a surface with regular fiber $F$ is universal with respect to bounded base surfaces if and only if the following two conditions are satisfied:
	\begin{itemize}
		\item [$(1)$]
			$\omega_u$ and $\widehat\omega_u$ are surjective;
			
		\item [$(2)$]
			any class of $\cal C_{g,b}$ can be represented by a vanishing cycle of $u$.
	\end{itemize}
\end{proposition}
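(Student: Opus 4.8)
The plan is to reduce universality to a matching of monodromy data and invoke the structure theorem for Lefschetz fibrations (the unnumbered theorem on a datum $L$) together with the pullback formulas for the monodromies. First I would record the simplifications peculiar to a bounded base of dimension two. If $N$ is a bounded surface then $A_f \subset N$ is a finite set of points, so the singular bundle $\zeta$ is just a choice of vanishing cycle $c_i$ at each point and its singular monodromy is trivial; moreover $N - A_f$ is aspherical, whence $\pi_2(N - A_f) = 0$ and the structure monodromy is vacuous (this is precisely the condition on $\omega^s_u$ that Theorem~\ref{universal/thm} must add in the closed case). By the structure theorem, $f$ is therefore determined up to equivalence by the pair $(\omega_f, \widehat\omega_f)$, up to a twisting when $F$ is exceptional. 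Condition $(2)$ of the definition of universal is automatic here, since any pullback of $u$ along a map out of a bounded surface is again a Lefschetz fibration with fiber $F$ over a bounded surface; hence universality amounts to realizing every such $f$ as a pullback $q^*(u)$.

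For necessity I would test $u$ against three families. A single-critical-point fibration over $B^2$ with an arbitrary vanishing cycle $c$ is, by universality, a pullback of $u$, and its unique cycle becomes a vanishing cycle of $u$, so $[c]$ is realized: this gives $(2)$. Since $\M_{g,b}$ is generated by Dehn twists about homologically essential curves (the allowable case), a disk fibration whose vanishing cycles form such a generating set has $\Im \omega_f = \M_{g,b}$; as $\omega_{q^*(u)} = \omega_u \circ q_{|*}$ forces $\Im \omega_f \subset \Im \omega_u$, this shows $\omega_u$ is onto. Finally an honest $F$-bundle over the annulus with arbitrary bundle monodromy $\gamma \in \widehat\M_{g,b}$ is a pullback of $u$, so $\gamma \in \Im \widehat\omega_u$; hence $\widehat\omega_u$ is onto. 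This yields $(1)$.

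For sufficiency, given $f \co V \to N$ I would build a $u$-regular map $q \co N \to S$ realizing the monodromy sequence of $f$. The crucial step is to realize each vanishing cycle \emph{exactly}, not merely up to its class. By $(2)$ the class of a prescribed cycle $c_i$ is carried by a meridian $\zeta$ of $A_u$ with $\omega_u(\zeta) = t_{c'}$, where $c' = \phi(c_i)$ for some $\phi \in \widehat\M_{g,b}$; surjectivity of $\widehat\omega_u$ yields $\eta \in \pi_1(S - A_u)$ with $\widehat\omega_u(\eta) = \phi$, and then the conjugate meridian $\eta^{-1}\zeta\eta$ satisfies $\omega_u(\eta^{-1}\zeta\eta) = t_{c_i}$, by commutativity of \eqref{cd1/eqn} and the fact that $\widehat\M_{g,b}$ acts transitively on the curves of a single class of $\cal C_{g,b}$. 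Surjectivity of $\widehat\omega_u$ likewise realizes each bundle-monodromy generator $\widehat\omega_f(\eta_j)$ by a loop in $S - A_u$. As $\pi_1(N - A_f)$ is free, sending the chosen generators of $N - A_f$ to these meridians and loops extends to a map $N - A_f \to S - A_u$; I then extend across each puncture $a_i$ by mapping a small disk onto a disk about the corresponding point of $A_u$, and a generic perturbation makes $q$ smooth and transverse to $u$.

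It remains to verify $q^*(u) \cong f$. By the pullback formulas the two fibrations have the same critical set $A_f$ and identical Lefschetz and bundle monodromies, so the structure theorem gives the equivalence; the only ambiguity, which occurs when $F$ is the torus or the annulus, is a twisting, and since $\omega_u$ is surjective the remark following the twisting construction shows that twisting leaves $u$ unchanged, so the ambiguity is vacuous. I expect the main obstacle to be this exact-cycle realization coupled with the faithful conversion of the algebraic monodromy matching into an actual $u$-regular map; the asphericity of $N - A_f$, which discards the structure-monodromy hypothesis needed in the closed-base Theorem~\ref{universal/thm}, is what keeps the bounded case within reach.
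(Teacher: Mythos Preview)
The paper does not give a proof of this proposition; it is quoted from the author's earlier work \cite{Z11}, so there is no in-paper argument to compare against. Your argument follows essentially the expected line and is correct in outline, with one slip in the final paragraph.

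You resolve the exceptional-fiber twisting ambiguity by invoking the Moishezon remark that twisting leaves $u$ unchanged when $\omega_u$ is surjective. But that remark concerns twistings of $u$ over its own base $S$, whereas what you must rule out is a twisting discrepancy between $q^*(u)$ and $f$, both of which live over $N$; Moishezon's statement would help only if $\omega_f$ itself were surjective, which you have no right to assume for an arbitrary $f$. The correct resolution is the one you already set up in your opening paragraph: since $N$ is a bounded surface, $N - A_f$ has the homotopy type of a graph, so $F$-bundles over it are classified by the bundle monodromy $\widehat\omega$ alone even when $\Pi_1(F) \neq 0$; combined with the Lefschetz monodromy at the finitely many critical values this determines $f$ uniquely, and the ``up to twistings'' clause of the structure theorem is vacuous over bounded bases. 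Replace the Moishezon sentence with this observation and your proof goes through.
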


\begin{proof}[Proof of Theorem \ref{universal/thm}]
	Suppose that $u \co U \to M$ satisfies the conditions of the statement and let $f \co V \to S$ be a Lefschetz fibration with regular fiber $F_{g,b}$ over a surface $S$. 
	
	If $S$ is closed let $S' \subset S$ be the complement of a disk in $S$, with $A_f \subset \Int S'$, and let $f' \co V' = f^{-1}(S') \to S'$ be the restriction of $f$ over $S'$. Otherwise, if $S$ has boundary, put $S' = S$ and $f' = f$.
	
	We claim that there is a connected surface $G \subset M$ transverse to $A_u$, such that $G \cap \overline N(A_u)$ is connected, the meridians of $A_u$ that are contained in $G \cap \overline N(A_u)$ normally generate $\mu_1(\overline N(A_u), A_u)$, and $G$ is $\pi_1$-surjective in $M$. 
	
	We start the construction of $G$ by considering a 2-disk $G_0 \subset \overline N(A_u) - A_u$ centered at $*$. Then we attach 2-dimensional bands $G_1, \dots, G_n \subset \overline N(A_u)$, each one representing a meridian of $A_u$ so that they normally generate $\mu_1(\overline N(A_u), A_u)$. The band $G_i$ is attached to $G_0$ along an arc for each $i \geq 1$. Then we attach suitable 2-dimensional orientable 1-handles to $G_0$ (chosen to be disjoint from $A_u$) which realize a finite set of generators for $\pi_1(M)$. The resulting surface satisfies the conditions of the claim.
	
	Now consider the Lefschetz fibration $u' = u_| \co U' \to G$ which is the restriction of $u$ over $G$, with $U' = u^{-1}(G) \subset U$. It turns out that $u'$ satisfies the conditions (1) and (2) of Proposition~\ref{main-old/thm}, hence $u'$ is universal for Lefschetz fibrations over bounded surfaces. Then $f' \cong (q')^*(u') = (q')^*(u)$ for a $u$-regular map $q' \co S' \to G \subset M$.
	
	The loop $\beta = q'(\Bd S')$, homotoped to represent an element of $\pi_1(M - A_u)$, satisfies $\widehat\omega_u(\beta) = 1$. Therefore, $\beta$ is trivial in $\pi_1(M - A_u)$ because $\widehat\omega_u$ is an isomorphism. So, the map $q'$ extends to a $u$-regular map $q \co S \to M$ such that $q(S - S') \subset M - A_u$.
	
	Now, if $F$ is not exceptional, we immediately conclude that $f = q^*(u)$, proving that $u$ is universal.
	
	Otherwise, if $F$ is exceptional, $q^*(u)$ differs from $f$ by a twisting determined by an element $\psi \in \Pi_1(F)$. Since $\omega_u^s$ is surjective, there is a map $q'' \co S^2 \to M - A_u$ such that $\omega_u^s([q'']) = \psi$. 
	
	Up to a small homotopy relative to $q^{-1}(A_u)$, we can assume that there is a small disk $D \subset S$ such that $q_{|D} \co D \to M - A_u$ is an embedding. Similarly, we can assume that there is a small disk $D' \subset S^2$ such that $q''_{|D'}$ is an embedding.
	
	We can form the connected sum $q''' = q \,\#\, q'' \co S \,\#\, S^2 \cong S \to M$ by identifying $\partial D$ with $\partial D'$, and by connecting their images by a tube contained in $M - A_u$. It follows that $(q''')^{*}(u) \cong f$.
	
	\medskip
	
	\noindent {\em Proof of the (partial) converse.}
	Let $u \co U \to M$ be universal with fiber $F_{g,b}$. By letting $S$ to be a suitable surface with boundary, it can be easily constructed a Lefschetz fibration $f \co V \to S$ such that $\widehat\omega_f$ and $\omega_f$ are surjective, and such that any element of $\cal C_{g,b}$ can be represented by a vanishing cycle (meaning that there are sufficiently many critical points of $f$). Since $f = q^*(u)$ for some $u$-regualar map $q \co S \to M$, it immediately follows that $\widehat\omega_u$ and $\omega_u$ are surjective, and condition $(3)$ of the statement. 
	
	Regarding the surjectivity of $\omega_u^s$, this immediately follows by representing arbitrary $F_{g,b}$-bundles over $S^2$ by a pullback of $u$ (regarding a surface bundle as a Lefschetz fibration without critical points).
	
	Finally, the last sentence follows by the discussion in next section.
\end{proof}

In case of Lefschetz fibrations over 3-manifolds we have the following result.

\begin{theorem}\label{universal3/thm}
	Let $u \co U \to M$ be a Lefschetz fibration with fiber $F$ which satisfies the following conditions:
	\begin{enumerate}
		\item
			$\widehat\omega_u$ and $\omega_u^s$ are isomorphisms (so, $\pi_2(M - A_u) = 0$ for $F$ not exceptional);
		
		\item
			$\omega_u$ and $\omega_u^\Join$ are surjective;
			
		\item
			any class of $\cal C_{g,b}$ can be represented by a vanishing cycle of $u$;
		
		\item
			$A_u$ is connected.
	\end{enumerate}
	Then $u$ is universal for Lefschetz fibrations over 3-manifolds and with fiber $F$ .
\end{theorem}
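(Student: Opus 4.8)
The plan is to follow the scheme used for Theorem~\ref{universal/thm}, decomposing the base $N$ of a given Lefschetz fibration $f \co V \to N$ along a tubular neighborhood $N(A_f)$ of its critical image, and building the classifying map $q \co N \to M$ separately over $N(A_f)$ and over the complement $N_0 = N - \Int N(A_f)$, then gluing. Since $\dim N = 3$, the critical image $A_f \subset N$ is a disjoint union of circles, while $A_u \subset M$ is a single circle by hypothesis $(4)$. The essential difference with the two-dimensional case is that the extension over the complement is now governed by obstruction theory, which is precisely why the \emph{isomorphism} hypotheses in $(1)$, equivalently $\pi_2(M - A_u) = 0$ in the non-exceptional case, are needed.

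First I would construct $q$ over the neighborhood $N(A_f)$. A singular bundle over a circle is classified by its monodromy, so to reproduce the singular bundle of $f$ over each component of $A_f$ it suffices to prescribe the singular monodromy. Since $\omega_u^\Join$ is surjective and $\pi_1(A_u) \cong \Z$, its image is cyclic and generated by $\omega_u^\Join$ of the generator; hence the singular monodromy of each component of $A_f$ is a power of that generator and is realized by a map $A_f \to A_u$ of suitable degree on each circle. As the normal bundles are oriented, the disk bundles $N(A_f)$ and $N(A_u)$ are trivial, so this map lifts to a bundle map of tubular neighborhoods. Using the surjectivity of $\omega_u$ together with condition $(3)$, the meridians of $A_f$ are sent to meridians of $A_u$ carrying the prescribed Dehn twists and vanishing cycles; since $A_u$ is connected there is a single vanishing-cycle class to match, supplied by $(3)$. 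The structure theorem for a datum $L$ proved above then gives that $q^*(u)$ and $f$ agree over $N(A_f)$, both being determined by their Lefschetz monodromy and singular bundle.

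Next I would extend over $N_0$, a compact $3$-manifold on whose boundary $\partial N(A_f)$ the map $q$ is already defined. Since $\widehat\omega_u$ is an isomorphism, $\widehat\omega_u^{-1} \circ \widehat\omega_f$ is a well-defined homomorphism $\pi_1(N - A_f) \to \pi_1(M - A_u)$; by the diagram \eqref{cd1/eqn} and injectivity of $\widehat\omega_u$ it sends meridians to meridians, so it is compatible with the boundary map already built. I would realize it by a map $N_0 \to M - A_u$ via obstruction theory relative to $\partial N_0$: the $1$-skeleton is mapped by the homomorphism, the extension over $2$-cells exists because relations map to null-homotopic loops, and the extension over the top $3$-cells exists because the obstruction lies in $H^3(N_0, \partial N_0; \pi_2(M - A_u))$, which vanishes since $\pi_2(M - A_u) = 0$. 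For $F$ not exceptional, $B\diff(F)$ is aspherical, so the two $F$-bundles over $N - A_f$, namely $q^*(u)$ and the regular bundle of $f$, have the same classifying map and therefore agree; gluing with the neighborhood piece yields $q^*(u) \cong f$. After a small perturbation making $q$ transverse to $f$, so that $q$ is $u$-regular, this proves universality, the second requirement of Definition~\ref{univ/def} being automatic from the notion of pullback.

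The main obstacle is exactly this extension-and-matching step over the complement, and especially the exceptional case. When $F$ is the torus or the annulus, $B\diff(F)$ is no longer aspherical, having $\pi_2 \cong \Pi_1(F) \ne 0$, so $\widehat\omega$ alone does not classify the regular bundle and an extra invariant at the $\pi_2$-level must be matched. This is the rôle of the structure monodromy, and here the hypothesis that $\omega_u^s$ is an \emph{isomorphism} (rather than merely surjective, as in the two-dimensional statement) is used: surjectivity lets me adjust the map on $\pi_2(N - A_f)$ so as to realize the correct structure monodromy of $f$, while injectivity pins down the bundle up to the twisting ambiguity, forcing $q^*(u) \cong f$. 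I expect the delicate points to be checking that the realizing map can be kept consistent with the neighborhood map along $\partial N(A_f)$ throughout the obstruction-theoretic extension, and verifying, in the exceptional case, that simultaneously matching $\widehat\omega$ and $\omega_u^s$ does determine the regular bundle over a $3$-manifold.
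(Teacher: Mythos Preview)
There is a concrete misreading that undermines your treatment of the singular piece. You write that ``$A_u \subset M$ is a single circle by hypothesis $(4)$'' and then that ``$\pi_1(A_u) \cong \Z$'', so the image of $\omega_u^\Join$ is cyclic. But the theorem does not assert $\dim M = 3$; in the explicit construction of Section~\ref{construct/sec} the universal base $M_3$ is $6$-dimensional and $A_u$ is a connected $4$-manifold, not a circle. Indeed, your reading makes hypothesis $(2)$ unsatisfiable for most fibers: if $\pi_1(A_u) \cong \Z$ then surjectivity of $\omega_u^\Join$ forces $\widehat\M_{g-1,b,2}$ to be cyclic, which fails already for $g \geq 2$. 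The repair is straightforward: since $\omega_u^\Join$ is surjective and $A_u$ is connected, for each circle component of $A_f$ the singular monodromy lifts to some element of $\pi_1(A_u)$, which you realize by a loop in $A_u$; the pullback of the normal $2$-disk bundle of $A_u$ over that loop is an oriented rank-$2$ bundle over $S^1$, hence trivial, so the lift to tubular neighborhoods goes through. Your assertion that $N(A_u)$ itself is a trivial disk bundle is neither justified nor needed.

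With that correction, your strategy (tubular neighborhood of $A_f$ plus obstruction theory on the complement) is sound and genuinely different from the paper's proof. The paper instead fixes a handle decomposition of $Y$ adapted to $L = A_f$ (so that $L$ meets $H^0$ in trivial arcs, meets each $H^1$ either in nothing or in its core, and avoids higher-index handles) and extends $q$ handle by handle: over $H^0$ via Theorem~\ref{universal/thm}; over each $H^1$ via surjectivity of $\widehat\omega_u$ (if disjoint from $L$) or via surjectivity of $\omega_u^\Join$ together with connectedness of $A_u$ (if it carries a strand of $L$); over $2$-handles via injectivity of $\widehat\omega_u$; and over the $3$-handle via injectivity of $\omega_u^s$, since $f$ is a trivial bundle over the attaching sphere so $[q_{|\Sigma}] \in \ker \omega_u^s = 0$. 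Your obstruction-theoretic packaging uses the same hypotheses and is perhaps cleaner in the non-exceptional case; the paper's handle-by-handle argument is more explicit in the exceptional case, where your account of how injectivity of $\omega_u^s$ resolves both the $3$-cell extension (now with $\pi_2(M - A_u) \cong \Pi_1(F) \neq 0$) and the twisting ambiguity is left vague and would need to be spelled out along the lines of the paper's $3$-handle step.
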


\begin{proof}
	Let $f \co V \to Y$ be a Lefschetz fibration, with $Y$ a connected 3-manifold. We want to show that $f$ is a pulback of $u$. The critical image $L = A_f$ is a curve in $Y$, that is a disjoint union of circles and arcs.
	
	\begin{claim} 
		If $Y$ is closed there is a handle decomposition of the form $$Y = H^0 \cup n_1 H^1 \cup n_2 H^2 \cup H^3$$ such that: 
		\begin{enumerate}
			\item
				$H^0 \cap L$ is a possibly empty set of trivial arcs;
				
			\item 
				$H^1 \cap L$ is either empty or the core of $H^1$ for any 1-handle;
				
			\item
				$H^i \cap L = \emptyset$ for any higher index handle.
		\end{enumerate}
	\end{claim}
	
	\begin{proof}[Proof of the claim (sketch).] \renewcommand{\qedsymbol}{}
		Start from an arbitrary handle decomposition, with only one 0-handle and one 3-handle. Observe that, up to isotopy, we can assume that $L$ is disjoint from the 2- and the 3-handles, and that its intersection with any 1-handle is either empty or a number of parallel copies of its core. It is now straightforward to add new 1-handles and complementary 2-handles to normalize the intersections with the 1-handles. By adding canceling pairs of 1- and 2-handles again we can normalize also the intersection with the 0-handle, and this proves the claim.
	\end{proof}
	
	Now proceed with the proof of the theorem. First, by taking the double, we can assume that $Y$ is closed. Consider a handle decomposition of $Y$ as that of the claim.
	
	Over $H^0$, $f$ is a product $f_0 \times \id \co V_0 \times I \to B^2 \times I \cong H^0$, with $f_0 \co V_0 \to B^2$ a Lefschetz fibration. It follows that $f_0$ is a pullback of $u$, because $u$ is universal for Lefschetz fibrations over a surface by Theorem~\ref{universal/thm}. So, there is a $u$-regular map $q \co H^0 \to M$ such that $q^*(u) = f_{|H^0}$.
	
	Next, we extend this map $q$ handle by handle, and after each step we continue to denote by $q$ also the extension. If $H^1$ does not intersect $L$, the monodromy of a loop that passes through it geometrically once can be easily realized by a map to $M - A_u$ that extends $q$ because $\widehat\omega_u$ is surjective, and this map trivially extends over the 1-handle.
	
	If $H^1$ intersects $L$, we can find an arc in $A_u$ between the two endpoints $q(S^0 \times \{0\})$, where $H^1 = B^1 \times B^2 \supset S^0 \times \{0\}$. This arc can be suitably chosen to realize the singular monodromy of $f$ along the core of $H^1$, by using the fact that $A_u$ is connected and $\omega_u^\Join$ is surjective. This means that $q$ can be extended over the core of $H^1$, hence to $H^1$.
	
	Extending $q$ to the 2-handles is possible because $\widehat\omega_u$ is an isomorphism. If $F$ is exceptional, we might also need to modify the map $q$ on $H^2$ in order to adjust the twisting, by an argument similar to that in the proof of Theorem~\ref{universal/thm}.
	
	Finally, extending $q$ to the 3-handle $H^3$ is also possible because over the attaching sphere $\Sigma$ of $H^3$, $f$ is a trivial bundle. So, $[q_{|\Sigma}] \in \ker \omega_u^s = 0$, and this implies that $q_{|\Sigma} \co \Sigma \to M - A_u$ is homotopic to a constant in $M - A_u$. Therefore, $q$ can be extended over $H^3$.
	
	We get $q \co Y \to M$ which is $u$-regular, such that $f = q^*(u)$.
\end{proof}

\section{Construction of universal Lefschetz fibrations}\label{construct/sec}

Now we give explicit constructions of universal Lefschetz fibrations. First, we handle the case of Lefschetz fibrations over a surface, and for the sake of simplicity we assume $b \in \{0,1\}$, although a similar construction can be made in general. Thereafter, we extend this construction to dimension three.

\paragraph{Dimension 2}
Consider a finite presentation of $\M_{g,b} = \langle \delta_1, \dots, \delta_k \, |\, r_1, \dots, r_l \rangle$ with generators $\delta_1, \allowbreak \dots, \allowbreak \delta_k$ and relators $r_1,\dots, r_l$. We assume that each $\delta_i$ corresponds to a positive or negative Dehn twist about a non-separating curve in $F_{g,b}$. Note that in this case $\#\cal C_{g,b} = 1$.

If $b = 1$, a presentation of $\widehat\M_{g,1}$ can be obtained from that of $\M_{g,1}$ by adding as a further relator the Dehn twist $r_0$ about a boundary parallel curve, expressed in terms of the generators $\delta_i$, that is $\widehat\M_{g,1} = \langle \delta_1, \dots, \delta_k \, |\, r_0, r_1 \dots, r_l \rangle$, where $r_0$ should be substituted by a product of the form $r_0 = \delta_{i_1}^{\epsilon_1} \cdots \delta_{i_p}^{\epsilon_p}$, with $i_j \in \{1, \dots, k\}$ and $\epsilon_j \in \{-1, 1\}$. Otherwise, if $b = 0$, we have $\widehat\M_{g,0} = \M_{g,0}$.

Now, consider a Lefschetz fibration $v \co V \to B^2$ with regular fiber $F_{g,b}$ and $k$ critical values, having  $(\delta_1, \dots,\allowbreak \delta_k)$ as the monodromy sequence with respect to some Hurwitz system. By abusing notation, we denote by $(\delta_1, \dots, \delta_k)$ also the elements of the Hurwitz system. That is, we consider $\pi_1(B^2 - A_v) = \langle \delta_1, \dots, \delta_k \rangle$.

By Proposition~\ref{main-old/thm}, $v$ is universal for Lefschetz fibrations with regular fiber $F_{g,b}$ over bounded surfaces.

Put $v' = \id \times v \co B^2 \times V \to B^2 \times B^2 \cong B^4$. Clearly $v'$ is a Lefschetz fibration with regular fiber $F_{g,b}$, and it is universal with respect to bounded base surfaces. Moreover $A_{v'} = B^2 \times A_v$ is a set of mutually parallel trivial disks in $B^4$. 

Each relator $r_i$ is a word in the generators $\delta_i$, so it can be represented by an embedded loop $\lambda_i$ in $S^3 - \partial A_{v'}$. Moreover, up to homotopy, we can assume that the loops $\lambda_i$ are pairwise disjoint.

Note that $\omega_{v'}$ and $\widehat\omega_{v'}$ are surjective.
In order to kill the kernel of $\widehat\omega_{v'}$ we add a 2-handle $H^2_i$ to $B^4$ along $\lambda_i$ with an arbitrary framing (for example with framing 0), for all $i$. Let $M_2$ be the resulting 4-manifold.

Let $L_i = H_i^2 \cap S^3$ be the attaching region of $H_i^2$.
Now, attach the trivial bundle $H_i^2 \times F_{g,b}$ to $B^2 \times V$ by a fiberwise identification $L_i \times F_{g,b} \cong (v')^{-1}(L_i)$. This is possible because $\lambda_i$ has trivial bundle monodromy. Let $U_2$ be the resulting 6-manifold.

We get a new Lefschetz fibration $u_2 \co U_2 \to M_2$ defined by $v'$ in $B^2 \times V \subset U_2$, and by the projection onto the first factor in $H^2_i \times F_{g,b}$ for all $i$.

If $F_{g,b}$ is not exceptional, by Theorem~\ref{universal/thm} we immediately conclude that $u_2$ is universal.

If $F$ is exceptional, in our situation we have $F = T^2$, and so $\Pi_1(T^2) \cong \Z^2$ \cite{Gr73}. The two generators of this group correspond to two oriented torus bundles $q_1$ and $q_2$ over $S^2$. Making the fiber sum of $u_2$ with $\id_{B^2} \times q_1$ and $\id_{B^2} \times q_2$ produces a Lefschetz fibration, which we still denote by $u_2 \co U_2 \to M_2$, that satisfies all the conditions of Theorem~\ref{universal/thm}, hence a universal one.

\paragraph{Dimension 3} 

Start with the Lefschetz fibration $v'$ of the above construction. First, we make the critical image connected.
Since the Dehn twists $\delta_i$ are conjugate to each other, we can add a suitable oriented band between the $i$-th and the $(i+1)$-th disks of $A_{v'}$, so that the monodromy extends over this band. After adding these bands for all $i \leq k-1$, we get a Lefschetz fibration $v'' \co V'' \to B^4$. Note that $A_{v''}$ is a ribbon disk in $B^4$.

At this point, we want to make the singular monodromy surjective. To do this, we modify also the base manifold as follows. Consider a finite set of generators for the general mapping class group of $F_{g,b}/c \cong F_{g-1, b}$ with two marked points, where $c$ is a vanishing cycle of $v''$, and take two points $a_1, a_2 \in \partial A_{v''}$, chosen to be very close to each other.

Let $g \co Y \to B^2$ be a Lefschetz fibration with fiber $F$, having 0 as the only critical value, with monodromy given by that of a meridian of $\partial A_{v''}$ in $S^3$ that bounds a disk in $S^3$ with center at the point $a_1$.

Add a 1-handle $H^1 = B^1 \times B^1 \times B^2 \cong B^1 \times B^3$ to $B^4$, with attaching sphere $\{a_1, a_2\}$. Then $v''$ extends over $H^1$ by the product $\id_{B^1} \times \id_{B^1} \times g \co B^1 \times B^1 \times Y \to H^1$. Actually, we attach $H^1$ trivially around $a_1$ and by realizing one generator of the mapping class group of $F/c$ around $a_2$. This is straightforward by taking into account the local product structure of $v''$ near $a_1$ and $a_2$. Proceed in a similar way to realize any generator by attaching further 1-handles.

We end with a Lefschetz fibration $v'''$ such that $A_{v'''}$ is connected (and of genus 0), the singular monodromy is surjective, and the Lefschetz and bundle monodromies are surjective. So, after adding suitable 2-handles, we make the bundle monodromy an isomorphism. 

If $F = T^2$ we have to make $\omega_{v'''}^s$ surjective, and this can be done by fiber sum with two torus bundles over the sphere, multiplied by the identity map, in analogy with the construction in dimension two.

We obtain a Lefschetz fibration $u' \co U' \to M'$ over a 4-manifold $M'$, which is universal for Lefschetz fibrations over surfaces. 

As the last step, we have to kill the kernel of $\omega_{u'}^s$. To do this, simply take a finite set of generators for $\ker(\omega_{u'}^s)$ and let $\alpha \co S^2 \to M' - A_{u'}$ be such a generator.

Consider the product $u'' = u' \times \id_{B^2} \co U' \times B^2 \to M' \times B^2$. In the 5-manifold $M' \times S^1 \subset \partial(M' \times B^2)$ the map $\alpha$ can be perturbed to an embedding, so it can be represented by an embedded sphere $\Sigma \subset M' \times S^1 - A_{u''}$. Add a 3-handle $H^3$ along this sphere, and extend $u''$ over $H^3$ by a trivial $F$-bundle. This is possible because $\Sigma$ is in the kernel of $\omega_{u''}^s$. Continue in this way to kill all the generators of the kernel. We end with a Lefschetz fibration $u_3 \co U_3 \to M_3$ over a 6-manifold $M_3$ which is universal for 3-dimensional bases.

\section{Lefschetz cobordism}\label{lf-cob/sec}

For a Lefschetz fibration $f \co V \to M$ we denote by $-f \co (-V) \to (-M)$ the same Lefschetz fibration between the same manifolds with reversed orientation. Note that $f$ and $-f$ have the same oriented fiber.
Let $f_1 \co V_1 \to M_1$ and $f_2 \co V_2 \to M_2$ be Lefschetz fibrations with fiber $F_g = F_{g,0}$ such that $\dim M_1 = \dim M_2 = m$, and with $M_i$ and $V_i$ closed.

\begin{definition} We say that $f_1$ and $f_2$ are cobordant if there exists a Lefschetz fibration $f \co W \to Y$ with the same fiber $F_g$ such that $\partial W = V_1 \sqcup (- V_2)$, $\partial Y = M_1 \sqcup (-M_2)$, and $f_{|\partial W} = f_1 \sqcup (-f_2) \co V_1 \sqcup (-V_2) \to M_1 \sqcup (-M_2)$. In particular, if $f_2 = \emptyset$, we say that $f_1$ is cobordant to zero or that it bounds.
\end{definition}

The cobordism of Lefschetz fibrations is clearly an equivalence relation. We denote by $\Lambda(g, m)$ the set of equivalence classes. We remark that we are considering only oriented, compact, not necessarily connected Lefschetz fibrations.

There is a general theory of (co)bordism in several flavours. The book of Conner and Floyd \cite{CL64} is a good reference for general bordism theory. On the other hand, in \cite{An08} it is considered the cobordism of maps having only singularities of some prescribed class specified by an invariant open subset of the space of $k$-jets. However, Lefschetz fibrations do not seem to fit well in this general setting, because of the rigidity of Lefschetz fibrations between closed manifolds. In \cite{St66} both the source and the target are allowed to change up to cobordism. In these theories there is no control over the fiber. However, to the author's knowledge, there is no a similar theory specific to Lefschetz fibrations.

\begin{definition}
	The sum of two cobordism classes is defined by $[f_1] + [f_2] = [f_1 \sqcup f_2 \co V_1 \sqcup V_2 \to M_1 \sqcup M_2]$.
\end{definition}

It turns out that this operation is well-defined (does not depend on the representatives), and $\Lambda(g,m)$ with this operation is an abelian group which we call the {\em Lefschetz cobordism group of genus $g$ and dimension $m$}. The identity element is the empty fibration (or equivalently, the class of a Lefschetz fibration that bounds), and the inverse is given by $-[f] = [-f]$. Indeed, $f - f$ bounds $f \times \id_I \co V \times I \to M \times I$.

We define another operation on $\Lambda(g,m)$. Let $D_i \subset M_i - A_{f_i}$ be a small ball, for $i = 1, 2$. So, $f_i$ is a trivial bundle over $D_i$, that is $f_i^{-1}(D_i)$ can be identified with $D_i \times F_g$. 

Let $M_1 \# M_2 = (M_1 - \Int(D_1)) \cup_\partial (M_2 - \Int(D_2))$ be the result of the identification $D_1 \cong -D_2$ restricted to the boundary, that is the ordinary connected sum. Also let $V_1 \#_{F_g} V_2 = (V_1 - \Int f_1^{-1}(D_1)) \cup_\partial (V_2 - \Int f_2^{-1}(D_2))$ be the result of the identification $f_1^{-1}(D_1) \cong D_1 \times F_g \cong -D_2 \times F_g \cong -f_2^{-1}(D_2)$, again restricted to the boundary. 

\begin{definition}
	The fiber sum of $f_1$ and $f_2$ is the Lefschetz fibration $f_1 \# f_2 \co V_1 \#_{F_g} V_2 \to M_1 \# M_2$ defined by $f_i$ on $V_i - \Int(f_i^{-1}(D_i))$, $i = 1, 2$.
\end{definition}

Note that, in general, the fiber sum operation depends on the choice of a gluing diffeomorphism $h \in \M_g$ that occurs in the above identification between the preimages of the balls. Actually, there is not a canonical choice for $h$. However, the following holds.

\begin{proposition}
	We have $[f_1] + [f_2] = [f_1 \# f_2]$. Therefore, the fiber sum does not depend on the choice of the attaching diffeomorphism up to cobordism. It follows that any class in $\Lambda(g, m)$ has a connected representative. 
\end{proposition}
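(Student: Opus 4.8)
The plan is to exhibit an explicit Lefschetz cobordism between $f_1 \sqcup f_2$ and $f_1 \# f_2$, built from a single $1$-handle attached in the regular region of the fibrations. At the level of base manifolds, $M_1 \# M_2$ is cobordant to $M_1 \sqcup M_2$ by attaching a $1$-handle $H^1 = B^1 \times B^m$ to the top of $(M_1 \sqcup M_2) \times I$, with attaching sphere $S^0 \times B^m$ glued to the two balls $D_1 \times \{1\}$ and $D_2 \times \{1\}$; the surgery on $S^0$ across the two components produces exactly the connected sum. Since by hypothesis $D_i \subset M_i - A_{f_i}$, this handle lies entirely in the region over which the fibrations are trivial $F_g$-bundles, and so the entire construction can be carried out fiberwise.

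First I would set $Y_0 = (M_1 \sqcup M_2) \times I$ and take the product Lefschetz fibration $(f_1 \sqcup f_2) \times \id_I \co (V_1 \sqcup V_2) \times I \to Y_0$, whose critical image is $A_{f_1 \sqcup f_2} \times I$. I then attach the base $1$-handle $H^1$ as above to obtain $Y = Y_0 \cup H^1$, and over $H^1$ I attach the trivial bundle $H^1 \times F_g$, gluing $(\{-1\} \times B^m) \times F_g$ fiberwise to $f_1^{-1}(D_1) \times \{1\} \cong D_1 \times F_g$ and $(\{+1\} \times B^m) \times F_g$ to $f_2^{-1}(D_2) \times \{1\} \cong -D_2 \times F_g$. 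Setting $W = ((V_1 \sqcup V_2) \times I) \cup (H^1 \times F_g)$ and defining $f$ to be the product fibration on the cylinder and the projection $H^1 \times F_g \to H^1$ on the handle, I get a map $f \co W \to Y$ that is a Lefschetz fibration: the handle is disjoint from $A_f = A_{f_1 \sqcup f_2} \times I$, so every critical point retains its local model, and after smoothing the corner where $H^1$ meets $Y_0$ the three conditions of Definition~\ref{lefschetz-map/def} hold.

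Next I identify the boundary. One boundary component of $Y$ is $(M_1 \sqcup M_2) \times \{0\}$, over which $f$ restricts to $f_1 \sqcup f_2$; the other is the surgered top $M_1 \# M_2$, over which $f$ restricts, by construction, to the fiber sum $f_1 \# f_2$. Thus $\partial Y = (M_1 \sqcup M_2) \sqcup (-(M_1 \# M_2))$ and $\partial W = (V_1 \sqcup V_2) \sqcup (-(V_1 \#_{F_g} V_2))$ with $f_{|\partial W} = (f_1 \sqcup f_2) \sqcup (-(f_1 \# f_2))$, which is precisely a cobordism witnessing $[f_1] + [f_2] = [f_1 \sqcup f_2] = [f_1 \# f_2]$. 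The gluing diffeomorphism $h \in \M_g$ of the fiber sum enters only in the fiberwise identification at the $\{+1\}$ end; since $H^1 \times F_g$ is a trivial bundle over the contractible handle, any $h$ can be absorbed into the trivialization, so the same cobordism works for every choice of $h$. As all fiber sums $f_1 \#_h f_2$ are therefore cobordant to the single class $f_1 \sqcup f_2$, the fiber sum is independent of $h$ up to cobordism.

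For the connected representative, I start from any representative of a class in $\Lambda(g,m)$, which is a disjoint union of connected Lefschetz fibrations, and iterate the fiber sum over successive components: each fiber sum of connected pieces yields a connected base (connected sum of connected manifolds) and a connected total space (the fibers $F_g$ being connected), and by the identity just proved it leaves the cobordism class unchanged, so after finitely many steps the representative becomes connected. The main obstacle I anticipate is the careful verification that $f \co W \to Y$ satisfies Definition~\ref{lefschetz-map/def} across the handle region, including the corner-smoothing at $\partial H^1 \cap Y_0$ and the compatibility of the fiberwise attachment of $H^1 \times F_g$ with the local triviality over $M_i - A_{f_i}$; and, relatedly, making precise the assertion that the trivialization of $H^1 \times F_g$ absorbs an arbitrary $h \in \M_g$, which is exactly where the independence from the attaching diffeomorphism resides.
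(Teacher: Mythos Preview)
Your proposal is correct and follows essentially the same approach as the paper: both build the cobordism by taking the product $(f_1 \sqcup f_2) \times \id_I$ and attaching a single 1-handle $H^1$ (together with $H^1 \times F_g$) in the regular region, with the fiber sum's gluing diffeomorphism $h$ incorporated into the fiberwise attachment on the $D_2$ side. Your write-up is more detailed about corner-smoothing, the absorption of $h$, and the iteration to connectedness, but the underlying argument is the same.
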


\begin{proof}
	Take the product $(M_1 \sqcup M_2) \times I$, and add an orientable 1-handle $H^1 = B^1 \times B^m$ to it, with attaching region $(D_1 \sqcup D_2) \times \{1\}$. Also glue $H^1 \times F$ to $(V_1 \sqcup V_2) \times I$ along $(f_1^{-1}(D_1) \sqcup f_2^{-1}(D_2)) \times \{1\} \cong (D_1 \sqcup D_2) \times F_g$ with a fibered attaching diffeomorphism such that the fiber is mapped onto itself by the identity on $D_1 \times F_g$ and by the attaching diffeomorphism occurring in the fiber sum on $D_2 \times F_g$. 
	We get a cobordism between $f_1 + f_2$ and $f_1 \# f_2$, and this concludes the proof.
\end{proof}

There is an obvious forgetful homomorphism $\Phi \co \Lambda(g, m) \to \Omega_{m + 2}^{\SO} \oplus \Omega_m^{\SO}$ defined by $\Phi([f \co V \to M]) = ([V], [M])$. This is surjective on the second component, since for any $M$ there is the trivial fibration $M \times F_g \to M$.

Now we consider the case $m = 2$.
For $f \co V \to S$ let $n_+(f)$ be the number of positive critical points of $f$, and let $n_-(f)$ be the number of negative critical points.
There are two canonical homomorphisms $\sigma, \eta \co \Lambda(g, 2) \to \Z$, defined by $\sigma([f]) = \Sign(V)$, and $\eta([f]) = n_+(f) - n_-(f)$. 

\begin{proposition}
	$\sigma$ and $\eta$ are well-defined homomorphisms.
\end{proposition}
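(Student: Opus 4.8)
The plan is to verify, for each of $\sigma$ and $\eta$, both the homomorphism property and the independence of the chosen representative. The homomorphism property is immediate for both: critical points and the signature are additive under disjoint union, so $\sigma([f_1 \sqcup f_2]) = \Sign(V_1 \sqcup V_2) = \Sign(V_1) + \Sign(V_2)$, while $n_\pm(f_1 \sqcup f_2) = n_\pm(f_1) + n_\pm(f_2)$ gives additivity of $\eta$. Everything therefore reduces to cobordism invariance: assuming $f_1 \co V_1 \to M_1$ and $f_2 \co V_2 \to M_2$ are cobordant via some $f \co W \to Y$ with $\partial W = V_1 \sqcup (-V_2)$ and $\partial Y = M_1 \sqcup (-M_2)$, I must show $\Sign(V_1) = \Sign(V_2)$ and $\eta(f_1) = \eta(f_2)$.

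For $\sigma$ this is the classical invariance of the signature under oriented cobordism. Here $\dim V_i = 4$ and $W$ is a compact oriented $5$-manifold with $\partial W = V_1 \sqcup (-V_2)$, so $[V_1] - [V_2] = [V_1 \sqcup (-V_2)] = 0$ in $\Omega_4^{\SO}$; since $\Sign \co \Omega_4^{\SO} \to \Z$ is a well-defined homomorphism (Thom), $\Sign(V_1) - \Sign(V_2) = \Sign(\partial W) = 0$. This settles $\sigma$ with no further work.

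The substance is the invariance of $\eta$, which I would read off from the critical image of the cobordism. As $\dim Y = 3$, the critical image $A_f \subset Y$ is a compact $1$-manifold, proper in $Y$, with $\partial A_f = A_{f_1} \sqcup A_{f_2} \subset \partial Y$. As is standard for cobordisms, I may assume $f$ is a product in a collar of $\partial W$, so that near $M_1$ the pair $(Y, A_f)$ is $(M_1 \times [0,1), A_{f_1} \times [0,1))$ with $f = f_1 \times \id$, and analogously $-f_2$ near $-M_2$. The complex structure of the local node model $f_0$ equips the rank-$2$ normal bundle $\nu(A_f \subset Y)$ with a canonical coorientation; together with the orientation of $Y$ this determines a global orientation $o$ of the $1$-manifold $A_f$. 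The key local computation, done in the collar, compares $o$ with the intrinsic sign of the corresponding node of the boundary fibration and yields $\epsilon_o(p) = -\sign(p)$ at every boundary point, where $\epsilon_o(p) = \pm 1$ according to whether $o$ points out of or into $Y$, and $\sign(p)$ is the sign of $p$ as a critical point of the restricted fibration ($f_1$ on the $M_1$ end, $-f_2$ on the $-M_2$ end).

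Since the signed boundary count of a compact oriented $1$-manifold vanishes, $\sum_{p \in \partial A_f} \epsilon_o(p) = 0$. Splitting this over the two ends and using $\sign_{-f_2}(p) = -\sign_{f_2}(p)$ (reversing the base orientation while keeping the common oriented fiber turns a positive node into a negative one) gives $-\eta(f_1) + \eta(f_2) = 0$, i.e. $\eta(f_1) = \eta(f_2)$, as required; note the conclusion is robust to the exact uniform sign in $\epsilon_o(p) = -\sign(p)$, what matters being only that the same local relation holds at both ends. The main obstacle is precisely this orientation bookkeeping: one must check that the coorientation from the node model is globally consistent along each component of $A_f$, that the induced boundary orientation recovers the intrinsic node sign, and that the reversal on the $M_2$-end supplies exactly the compensating sign. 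The identity $\sign_{-f}(p) = -\sign_f(p)$ for total base reversal with fixed oriented fiber is the crucial ingredient, and is also what makes $\eta$ compatible with $-[f] = [-f]$ in $\Lambda(g,2)$.
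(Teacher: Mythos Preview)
Your proof is correct and follows essentially the same approach as the paper: for $\sigma$ you invoke cobordism invariance of the signature (the paper simply calls this ``obvious''), and for $\eta$ you use that the critical image $A_f$ of the cobordism is a compact properly embedded $1$-manifold in $Y$ whose boundary is $A_{f_1} \sqcup A_{f_2}$. The only presentational difference is that the paper argues component-by-component on the arcs of $A_f$ (an arc with both endpoints in the same $S_i$ contributes a pair of opposite-sign nodes, an arc joining $S_1$ to $S_2$ contributes nodes of the same sign on each side), whereas you package the same bookkeeping by orienting $A_f$ via the node coorientation and invoking $\sum_{p\in\partial A_f}\epsilon_o(p)=0$; these are equivalent, and the paper simply asserts the sign facts that you spell out.
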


\begin{proof}
	It is obvious that $\sigma$ is well-defined and a homomorphism. Let us prove the proposition for $\eta$.
	Let $h \co W \to Y$ be a cobordism between $f_1 \co V_1 \to S_1$ and $f_2 \co V_2 \to S_2$. The critical image of $h$ is a properly embedded compact curve in the 3-manifold $Y$. So, $A_h$ is a disjoint union of circles and arcs. Circles do not contribute to $\eta$. If an arc has both endpoints in $S_1$ (or in $S_2$), these are two opposite critical points of $f_1$ (or $f_2$), so they cancel. If there is one endpoint in $S_1$ and the other in $S_2$, these are critical points of, respectively, $f_1$ and $f_2$ of the same sign. Since any critical point of $f_i$ is the endpoint of an arc, we get $\eta(f_1) = \eta(f_2)$, and so $\eta$ is well-defined. That $\eta$ is a homomorphism is immediate.
\end{proof}

\begin{remark}
	By results in \cite{EN05}, $\sigma$ and $\eta$ are surjective for $g \geq 2$. In fact, it is proved that any lantern relation contributes $\pm 1$ to the signature and (obviously) to $\eta$, so by putting sufficiently many lantern relations or its inverses, in the monodromy sequence of a Lefschetz fibration over the sphere, we realize all signatures and get the surjectivity of $\eta$. These fibrations are achiral.
\end{remark}

\begin{remark}
The signature defines an isomorphism $\Omega_4^{\SO} \cong \Z$. So, $\sigma$ is equivalent to the forgetful homomorphism $\Phi$.
\end{remark}

We conclude by showing a remarkable relation with the singular bordism groups, implying that $\Lambda(g, 2)$ and $\Lambda(g, 3)$ are finitely generated.

Let $\Omega_n(X)$ denote the $n$-dimensional singular bordism group of $X$. Recall that the elements of $\Omega_n(X)$ are the bordism classes of oriented singular $n$-manifolds in $X$, that is pairs of the form $(N, q)$, with $N$ a closed oriented $n$-manifold, and $q \co N \to X$. These groups can be expressed in terms of singular homology with coefficients in the cobordism ring $\Omega_*^{\SO}$, modulus odd torsion \cite{CL64}.

\begin{proposition}\label{cobord/thm}
	Let $f \co V \to M$ be a Lefschetz fibration with fiber $F_g$. For any $n$ there is a canonical homomorphism $f_* \co \Omega_n(M) \to \Lambda(g, n)$, defined by $f_*([(N, q)]) = [q^*(f)]$.
\end{proposition}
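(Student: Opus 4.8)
The plan is to verify three things: that the pullback $q^*(f)$ makes sense once $q$ is put into general position, that the resulting class $[q^*(f)] \in \Lambda(g,n)$ depends only on the bordism class of $(N, q)$, and that the assignment respects the group operations. The observation that trivializes all the transversality bookkeeping is that, for a map out of a \emph{closed} manifold, $f$-regularity is equivalent to ordinary transversality to the critical image $A_f$. Indeed, off $\widetilde A_f$ the map $f$ is a submersion, so the transversality condition $\mathrm{im}(dq_x) + \mathrm{im}(df_v) = T_{q(x)}M$ holds automatically at every pair $(x, v)$ with $v$ a regular point; at a critical point $v \in \widetilde A_f$ the local model of Definition~\ref{lefschetz-map/def} gives $\mathrm{im}(df_v) = T_{f(v)}A_f$, so the condition becomes $\mathrm{im}(dq_x) + T_{f(v)}A_f = T_{f(v)}M$, that is $q \pitchfork A_f$. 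Since $f_| \co \widetilde A_f \to A_f$ is a diffeomorphism, this is a clean condition on $q$ alone.

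First I would fix an $f$-regular representative in each class. Starting from any $q \co N \to M$, the Thom transversality theorem produces an arbitrarily small perturbation with $q \pitchfork A_f$, hence $f$-regular; as $N$ is closed this is all that is required. By the discussion following Definition~\ref{f-regular/def}, $q^*(f) \co \widetilde V \to N$ is then a Lefschetz fibration with regular fiber $F_g$ and with both $\widetilde V$ and $N$ closed, so $[q^*(f)]$ defines an element of $\Lambda(g,n)$ once $\widetilde V$ is oriented: because the bundle monodromy lands in the orientation-preserving group $\widehat\M_{g,0}$, the base orientation of $N$ together with the fixed fiber orientation (and the complex orientation near the singular fibers) orients $\widetilde V$, and reversing the orientation of $N$ reverses that of $\widetilde V$.

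Next I would prove independence of all choices by running the same pullback construction one dimension higher. If $q_0$ and $q_1$ are two $f$-regular perturbations of a single map, they are joined by a homotopy $H \co N \times I \to M$ which, by relative transversality, may be taken $f$-regular rel $N \times \{0,1\}$; then $H^*(f)$ is a Lefschetz fibration over $N \times I$ with boundary restriction $q_0^*(f) \sqcup (-q_1^*(f))$, whence $[q_0^*(f)] = [q_1^*(f)]$. For bordism invariance, let $(P, Q)$ realize a bordism between $(N_0, q_0)$ and $(N_1, q_1)$, with $\partial P = N_0 \sqcup (-N_1)$ and the $q_i$ already $f$-regular. Making $Q$ $f$-regular rel $\partial P$, the pullback $Q^*(f)$ over $P$ is a Lefschetz fibration with fiber $F_g$ whose restriction to $\partial P$ is $q_0^*(f) \sqcup (-q_1^*(f))$ by naturality of the fiber product; this is precisely a Lefschetz cobordism, so $f_*$ descends to $\Omega_n(M)$.

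Finally, the homomorphism property is immediate, since addition in both $\Omega_n(M)$ and $\Lambda(g,n)$ is disjoint union and $(q_0 \sqcup q_1)^*(f) \cong q_0^*(f) \sqcup q_1^*(f)$. The step I expect to demand the most care is the cobordism argument: one must arrange $f$-regularity of $Q$ rel $\partial P$ \emph{and} check that the pullback over the bordered base $P$ restricts on the two ends to $q_0^*(f)$ and $q_1^*(f)$ with the correct orientations, i.e. that the total space $\widetilde W$ carries the boundary $\widetilde{V}_0 \sqcup (-\widetilde{V}_1)$. This follows from the standard orientation convention for fiber products combined with the orientability of the total space established above, but it is the only point where the verification is not purely formal.
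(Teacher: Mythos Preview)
Your argument is correct and follows exactly the same line as the paper's proof: perturb $q$ to be $f$-regular, take the pullback, and verify bordism invariance by making the bordism $Q$ $f$-regular rel boundary and pulling back again. You supply more detail than the paper does---notably the reduction of $f$-regularity to ordinary transversality with $A_f$ and the orientation discussion for the total space---but the strategy is identical.
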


\begin{proof}
	Let $(N, q)$ be a representative of a class of $\Omega_n(M)$, so $q \co N \to M$. Up to a small homotopy we can assume that $q$ is $f$-regular. Then we can take the pullback $q^*(f)$.
	
	If $(N', q')$ is bordant to $(N, q)$, where $q' \co N' \to M$ is $f$-regular, there is a bordism  $Q \co Y \to M$, where $Y$ is a cobordism between $N$ and $N'$, and $Q_{|\partial Y} = q \sqcup (-q')$. Up to homotopy relative to the boundary, we can assume that $Q$ is $f$-regular. Then, $Q^*(f)$ is a cobordism between $q^*(f)$ and $(q')^*(f)$.
	
	It follows that the map $f_* \co \Omega_n(M) \to \Lambda(g, n)$, $f_*([(N, q)]) = [q^*(f)]$, is a well-defined homomorphism.
\end{proof}

\begin{corollary}\label{lfcobord-bord/thm}
	If $u \co U \to M$ is universal with respect to Lefschetz fibrations over $n$-manifolds, then $u_* \co \Omega_n(M) \to \Lambda(g, n)$ is surjective. Therefore, there are epimorphisms $u_{n*} \co \Omega_n(M_n) \to \Lambda(g, n)$ for $n = 2, 3$, with $u_n \co U_n \to M_n$ the two universal Lefschetz fibrations constructed in Section~\ref{construct/sec}.
\end{corollary}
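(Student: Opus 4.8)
The plan is to read the surjectivity directly off the definition of universality together with Proposition~\ref{cobord/thm}, so that the universality hypothesis does all the work. First I would fix an arbitrary class $[f] \in \Lambda(g, n)$ and choose a representative $f \co V \to N$ with $N$ and $V$ closed and oriented, and with fiber $F_g = F_{g,0}$. Since $u$ is universal with respect to Lefschetz fibrations over $n$-manifolds with fiber $F_g$, and since $N$ is itself the base of such a fibration, condition $(1)$ of Definition~\ref{univ/def} supplies a $u$-regular map $q \co N \to M$ with $q^*(u) \cong f$.

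Next I would observe that the pair $(N, q)$ is an oriented singular $n$-manifold in $M$, hence determines a class $[(N, q)] \in \Omega_n(M)$; that $q$ happens to be $u$-regular is harmless, since an arbitrary continuous map is allowed as a representative of a singular bordism class. By Proposition~\ref{cobord/thm} we then have $u_*([(N, q)]) = [q^*(u)]$. As $q^*(u) \cong f$ and equivalent Lefschetz fibrations are in particular cobordant, we get $[q^*(u)] = [f]$ in $\Lambda(g, n)$. Therefore $u_*([(N, q)]) = [f]$, and since $[f]$ was arbitrary, $u_*$ is surjective.

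For the final assertion I would simply specialize this to the explicit universal fibrations constructed in Section~\ref{construct/sec}. Taking $b = 0$ in that construction, so that the regular fiber is the closed surface $F_g$, the fibrations $u_2 \co U_2 \to M_2$ and $u_3 \co U_3 \to M_3$ are universal for Lefschetz fibrations with fiber $F_g$ over surfaces and over $3$-manifolds respectively, by Theorems~\ref{universal/thm} and~\ref{universal3/thm}. Applying the surjectivity just established with $n = 2$ and $n = 3$ then yields the desired epimorphisms $u_{n*} \co \Omega_n(M_n) \to \Lambda(g, n)$.

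I do not expect a genuine obstacle here: the whole content sits inside the universality hypothesis, which is precisely designed to produce the required classifying map $q$. The only points meriting care are bookkeeping ones, namely confirming that equivalence of fibrations descends to equality of cobordism classes (immediate, as $\cong$ is finer than the cobordism relation), and checking that the fiber conventions match, i.e.\ that the $b = 0$ instance of the Section~\ref{construct/sec} construction delivers universal fibrations with the closed fiber $F_g$ used to define $\Lambda(g, n)$.
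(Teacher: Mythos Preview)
Your proposal is correct and follows essentially the same approach as the paper: pick a representative $f \co V \to N$, invoke universality to produce a $u$-regular $q \co N \to M$ with $q^*(u) \cong f$, and conclude $u_*([(N,q)]) = [f]$. The paper's proof is a two-line version of exactly this argument, without the extra bookkeeping remarks you include.
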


\begin{proof}
	Let $[f \co V \to N]$, $\dim N = n$, be an element of $\Lambda(g, n)$. Since $u$ is universal, there is $q \co N \to M$ such that $f = q^*(u)$, and so $[f] = u_*([(N, q)])$.
\end{proof}

Note that there is also the epimorphism $u_{3*} \co \Omega_2(M_3) \to \Lambda(g, 2)$.

\begin{corollary}
	There is an epimorphism $\lambda \co H_2(M_2) \to \Lambda(g, 2)$.
\end{corollary}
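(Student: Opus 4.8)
The plan is to factor the epimorphism of Corollary~\ref{lfcobord-bord/thm} through singular homology. That corollary gives an epimorphism $u_{2*} \co \Omega_2(M_2) \to \Lambda(g, 2)$, so it suffices to exhibit an isomorphism $\mu \co \Omega_2(M_2) \to H_2(M_2)$ and define $\lambda = u_{2*} \circ \mu^{-1}$. Being the composite of an isomorphism with an epimorphism, such a $\lambda$ is automatically an epimorphism, and concretely it sends a class $\alpha \in H_2(M_2)$ to $[q^*(u_2)]$, where $q \co N \to M_2$ is any $u_2$-regular map from a closed oriented surface with $q_*[N] = \alpha$.

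The isomorphism I would use is the Thom homomorphism $\mu([(N, q)]) = q_*[N]$, carrying a singular surface to the image of its fundamental class. To see it is an isomorphism I would invoke the Atiyah--Hirzebruch spectral sequence $E^2_{p,q} = H_p(M_2; \Omega_q^{\SO}) \Rightarrow \Omega_{p+q}(M_2)$, which is precisely the homological decomposition of oriented bordism alluded to just before Proposition~\ref{cobord/thm}. In total degree two the only candidate terms are $E^2_{2,0}$, $E^2_{1,1}$, $E^2_{0,2}$; since $\Omega_1^{\SO} = \Omega_2^{\SO} = 0$, the last two vanish, leaving $E^2_{2,0} = H_2(M_2; \Omega_0^{\SO}) = H_2(M_2)$. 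No differential enters or leaves this spot (the outgoing $d_2$ lands in $E^2_{0,1} = H_0(M_2; \Omega_1^{\SO}) = 0$, and the incoming one originates in a negative row), so $E^\infty_{2,0} = E^2_{2,0}$ and the edge homomorphism, which is exactly $\mu$, is an isomorphism. Equivalently, one may argue directly by Steenrod representability: in degree two every homology class is $q_*[N]$ for a map from a closed oriented surface, and two such surfaces representing the same class are oriented-bordant over $M_2$ because $\Omega_2^{\SO} = 0$; this gives surjectivity and injectivity of $\mu$ at once.

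With $\mu$ available, well-definedness of $\lambda$ is formal: the freedom in choosing $q$ representing $\alpha$ is absorbed by the injectivity of $\mu$, while the passage from $u_2$-regular maps to cobordism classes is exactly the content of Proposition~\ref{cobord/thm} (bordant singular manifolds pull back to cobordant Lefschetz fibrations). Surjectivity of $\lambda$ is inherited from that of $u_{2*}$.

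The only genuine point requiring care is the identification $\Omega_2(M_2) \cong H_2(M_2)$; everything else is bookkeeping. The essential inputs there are the vanishing $\Omega_1^{\SO} = \Omega_2^{\SO} = 0$ and the fact that, although $M_2$ is a manifold with boundary, the spectral sequence (equivalently, low-dimensional Steenrod representability) applies to it verbatim as a topological space, since singular bordism and singular homology are homotopy-functorial and make no reference to a manifold structure on $M_2$.
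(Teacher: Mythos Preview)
Your proof is correct and follows the same overall strategy as the paper: define $\lambda = u_{2*} \circ \mu^{-1}$, where $\mu \co \Omega_2(M_2) \to H_2(M_2)$ is the Thom homomorphism $\mu([(N,q)]) = q_*[N]$, and invoke Corollary~\ref{lfcobord-bord/thm} for the surjectivity of $u_{2*}$.

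The one point of difference is how the isomorphism $\mu$ is justified. You use the Atiyah--Hirzebruch spectral sequence together with $\Omega_1^{\SO} = \Omega_2^{\SO} = 0$, which in fact shows $\Omega_2(X) \cong H_2(X)$ for \emph{any} space $X$. The paper instead appeals to the specific structure of $M_2$: since $M_2$ is $B^4$ with 2-handles attached, $H_*(M_2)$ is torsion-free and concentrated in degrees $0$ and $2$, and then the Conner--Floyd description of $\Omega_*(X)$ in terms of $H_*(X;\Omega_*^{\SO})$ modulo odd torsion gives the isomorphism. Your argument is a bit more direct and avoids invoking anything about the topology of $M_2$; the paper's argument, while sufficient, is using more than is strictly needed.
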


\begin{proof}
	The canonical homomorphism $\mu \co \Omega_2(M_2) \to H_2(M_2)$ defined by $\mu([(N, q)]) = q_*([N])$ is an isomorphism because $H_*(M_2)$ has no torsion and $H_i(M_2) = 0$ for $i \ne 0, 2$ (indeed, $M_2$ is $B^4$ union 2-handles) \cite[Chapter II]{CL64}. Therefore, $u_{2*} \circ \mu^{-1}$ is an epimorphism.
\end{proof}


\end{document}